\theoremstyle{plain}
\newtheorem{thm}{Theorem}[section]
\newtheorem{lem}[thm]{Lemma}
\newtheorem{prop}[thm]{Proposition}
\newtheorem{cor}[thm]{Corollary}
\theoremstyle{definition}
\newtheorem{ass}{Assumption}
\begin{document}
\title{The structure of pointwise recurrent expansive homeomorphisms}

\author[E.~Shi]{Enhui Shi}
\address[E. Shi]{Soochow University, Suzhou, Jiangsu 215006, China}
\email{ehshi@suda.edu.cn}

\author[H.~Xu]{Hui Xu}
\address[H. Xu]{CAS Wu Wen-Tsun Key Laboratory of Mathematics, University of Science and
Technology of China, Hefei, Anhui 230026, China}
\email{huixu2734@ustc.edu.cn}

\author[Z.~Yu]{Ziqi Yu}
\address[Z. Yu]{Soochow University, Suzhou, Jiangsu 215006, China}
\email{20204207013@stu.suda.edu.cn}

\keywords{recurrence, expansivity, almost periodic point, symbolic system, topological dimension}

\subjclass[2010]{54H20, 37B20}

\maketitle


\begin{abstract}
Let $X$ be a compact metric space and let $f:X\rightarrow X$ be a homeomorphism on $X$. We show that
if $f$ is both pointwise recurrent  and expansive, then the dynamical system $(X, f)$ is topologically conjugate
to a subshift  of some symbolic system. Moreover, if $f$ is pointwise positively recurrent, then the
subshift is semisimple; a counterexample is given to show the necessity of positive recurrence to ensure
the semisimilicity.
\end{abstract}

\pagestyle{myheadings} \markboth{E. Shi, H. Xu, and Z. Yu }{Pointwise recurrent expansive homeomorphisms}

\section{Introduction}

By a {\it dynamical system} (or a {\it system} for short), we mean a pair $(X, f)$
where $X$ is a compact metric space and $f$ is a homeomorphism on $X$. Recurrence is one of the most important
 subject in the study of dynamical systems. We know that periodic points, distal points, and almost periodic points are all
 recurrent points. The structure of pointwise recurrent homeomorphisms has been intensively studied by many authors under
some specified assumptions on the dynamics of $f$ or on the topology of the phase space $X$.
A classical result due to Montgomery says that every pointwise periodic homeomorphism on
a connected manifold is periodic (see \cite{Mo}). Similar results were established for pointwise
recurrent homeomorphisms on some surfaces (see \cite{KP, OT}). However, Glasner and Maon showed
that even if $f$ possesses very strong recurrence, the dynamics of $f$ can still be complicated (see \cite{GM}).
Mai and Ye determined the structure of pointwise recurrent  maps having the pseudo orbit tracing property (see \cite{MY}).
 It is well known that minimal homeomorphisms are pointwise
almost periodic. The structure of minimal distal systems was completely described by Furstenberg
in \cite{Fu}. One may consult \cite{Au} for a detailed introduction to the structure theory of
general minimal systems.
\medskip

Expansivity comes from the study of structural stability in differential dynamical systems,
which is also a kind of chaotic property. Many important systems are known to be expansive, such as Anosov systems and
subshifts of symbolic systems. It is known that the circle and the sphere $\mathbb S^2$
admit no expansive homeomorphisms (see \cite{Ao, Hi}) and every compact orientable surface of positive genus admits an expansive
homeomorphism (see \cite{OR}). Ma\~n\'e showed that if $X$ admits an expansive homeomorphism,
then the topological dimension ${\rm dim}(X)<\infty$ (see \cite{Mane}). One may refer to \cite{Ao} for a systematic
introduction to this property.
\medskip

The following celebrated result is  due to Ma\~n\'e, which
clarifies the structure of minimal expansive homeomorphisms.

\begin{thm}[\cite{Mane}] \label{mane}
Let $X$ be a compact metric space. If $X$ admits a minimal expansive homeomorphism $f$,
then  $(X, f)$ is topologically conjugate to a minimal subshift of some symbolic system.
\end{thm}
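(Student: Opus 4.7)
The plan is to code $(X,f)$ by a finite clopen partition of $X$. Fix an expansive constant $c>0$ for $f$. If $X$ admits a finite clopen partition $\mathcal A=\{A_1,\dots,A_k\}$ with $\diam(A_i)<c$ for each $i$, define the \emph{itinerary map}
\[
\phi\colon X\to \{1,\dots,k\}^{\mZ},\qquad \phi(x)_n=i\iff f^n(x)\in A_i.
\]
Because the $A_i$ are clopen, $\phi$ is continuous; by construction $\phi\circ f=\sigma\circ\phi$ where $\sigma$ denotes the shift; and $\phi$ is injective by expansivity, since $\phi(x)=\phi(y)$ forces $d(f^nx,f^ny)<c$ for every $n\in\mZ$ and hence $x=y$. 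Compactness of $X$ then makes $\phi$ a topological embedding; its image is closed and $\sigma$-invariant, i.e.\ a subshift, and minimality of $f$ transfers to the image. Thus the entire proof reduces to producing such a clopen partition, i.e.\ to showing that $X$ is zero-dimensional.

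The heart of the argument is therefore the claim that a compact metric space carrying a minimal expansive homeomorphism is totally disconnected. The crucial input is Ma\~n\'e's earlier theorem that any compact metric space admitting an expansive homeomorphism satisfies $\dim(X)<\infty$. Suppose for contradiction that some connected component $C\subset X$ is non-degenerate. Since $f$ is a homeomorphism, $\{f^n(C):n\in\mZ\}$ is again a family of connected components, and minimality forces $\bigcup_n f^n(C)$ to be dense in $X$, with all components ``comparable'' under the orbit action. Expansivity implies that any two distinct points in $C$ are eventually $c$-separated along their $f$-orbits, so each $f^n(C)$ is a continuum whose diameter is bounded below by a positive constant independent of $n$. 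Combining this lower bound with the observation that the pairwise disjoint family $\{f^n(C)\}$ accumulates onto a large subset of $X$, one produces more independent non-trivial continua than a finite-dimensional compactum can accommodate, contradicting $\dim(X)<\infty$.

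Once $X$ is known to be zero-dimensional, the rest is routine: a compact metric zero-dimensional space admits a finite clopen cover by sets of diameter less than $c$, which can be disjointified into a clopen partition $\mathcal A$, and the first paragraph finishes the proof. The principal obstacle is the dimension step of the second paragraph, namely turning the tension between minimality (which homogenizes the sizes of the orbit components) and expansivity (which keeps them non-trivially spread out) into a genuine contradiction with the finite topological dimension guaranteed by Ma\~n\'e's prior theorem. The coding construction and the verification that minimality passes to the resulting subshift are, by contrast, essentially formal.
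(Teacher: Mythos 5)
Your first and third paragraphs (the itinerary coding once a clopen partition of diameter $<c$ exists, and the transfer of minimality to the image) are correct and are exactly the ``canonical coding technique'' the paper alludes to; the theorem indeed reduces to showing $\dim(X)=0$. The problem is that your second paragraph, which is the entire mathematical content, does not constitute a proof. Two specific steps fail. First, expansivity does \emph{not} imply that $\diam(f^n(C))$ is bounded below by a positive constant independent of $n\in\mathbb{Z}$: the definition only provides, for each pair of distinct points, \emph{some} time at which they are $c$-separated, and a nondegenerate continuum can perfectly well have $\diam(f^n(C))\to 0$ as $n\to+\infty$ (this is precisely what happens for a continuum contained in a local stable set, which is the object Ma\~n\'e's argument actually isolates). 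Second, even granting a uniform lower bound, ``more disjoint nontrivial continua than a finite-dimensional compactum can accommodate'' is not a contradiction: already the one-dimensional compactum $K\times[0,1]$, with $K$ a Cantor set, contains uncountably many pairwise disjoint continua of diameter $1$. So finite-dimensionality alone cannot be the engine of the contradiction, and no amount of tightening of your counting heuristic will make it one.

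The actual route --- due to Ma\~n\'e and reproduced in this paper as Theorem \ref{mane dim}, Lemma \ref{manekeylemma}, and the proof of Theorem \ref{mainthm} --- is dynamical rather than dimension-theoretic at the final step. If $\dim(X)>0$, Lemma \ref{manekeylemma} produces a point $a$ and $\varepsilon>0$ such that the component of $W^s_\varepsilon(a)\cap\overline{B_\varepsilon(a)}$ through $a$ (or the unstable analogue) meets $\partial B_\varepsilon(a)$; this is where $\dim(X)<\infty$ and a Baire/Hurewicz-type argument are genuinely used. One then passes to an adapted hyperbolic metric (Theorem \ref{hypermetric}), under which backward iteration uniformly expands these stable continua, and a nested-intersection argument (as in the proof of Theorem \ref{mainthm}) yields a point $w$ whose backward orbit avoids a fixed ball while its forward orbit is asymptotic to a proper closed invariant set. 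Such a $w$ is not recurrent, hence not almost periodic, which contradicts minimality. You should either supply this construction or explicitly cite the zero-dimensionality statement (Theorem \ref{mane dim}) as the input; as written, the reduction to $\dim(X)=0$ is asserted but not proved.
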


The purpose of the paper is to extend Theorem \ref{mane} to the case in which $f$ is both pointwise recurrent and expansive.
Recall that a system $(X, f)$ is {\it semisimple} if $X$ is the disjoint union of minimal sets (i.e. every point of $X$
is almost periodic).
\medskip

The following is the main theorem of the paper.

\begin{thm}\label{mainthm}
Let $X$ be a compact metric space and $f$ be a homeomorphism on $X$. If $f$ is both pointwise  recurrent {\color{blue}(resp. positively recurrent)} and expansive, then the dynamical system $(X, f)$ is topologically conjugate to a  subsystem {\color{blue}(resp. semisimple subsystem)} of some symbolic system.\end{thm}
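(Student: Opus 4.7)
The plan is to follow Ma\~n\'e's strategy for minimal expansive systems (Theorem \ref{mane}) and adapt it to the pointwise recurrent setting. Since any subshift of a symbolic system is a closed subset of a Cantor space, a consequence of the theorem is that $X$ itself must be zero-dimensional; accordingly, my plan has three stages: first establish that $X$ is totally disconnected, then build an explicit coding via a clopen partition, and finally observe that the semisimple refinement is automatic.

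The first stage is the heart of the proof and the principal obstacle. Ma\~n\'e's theorem already supplies $\dim X < \infty$ from expansivity alone, and the task is to upgrade this to $\dim X = 0$ using pointwise recurrence. I would proceed by contradiction: assume there exists a nondegenerate continuum $C \subseteq X$. By expansivity with constant $c$, some iterate $f^{n}(C)$ must attain $\diam \geq c$. On the other hand, pointwise recurrence of each point of $C$, combined with continuity of the iterates of $f$, yields a family of subcontinua of $C$ whose forward iterates remain close to the original base points. Playing these two phenomena against each other---for instance via a Baire-category argument on the hyperspace of subcontinua of $X$, or by extracting a limit continuum that must simultaneously expand to size $c$ and retract close to a point---should produce the required contradiction. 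This is exactly where pointwise recurrence is used in an essential way, and it is where the technical weight of the argument should lie.

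Once $X$ is known to be zero-dimensional, the construction of the conjugacy is routine. Compactness and zero-dimensionality provide a finite clopen partition $\{U_1,\ldots,U_k\}$ of $X$ with $\diam(U_i) < c$ for all $i$. Define the itinerary map $\pi : X \to \{1,\ldots,k\}^{\mZ}$ by $\pi(x)_n = i$ iff $f^n(x) \in U_i$. Continuity follows from clopenness of the $U_i$; equivariance with the shift is immediate; and injectivity follows from expansivity together with $\diam(U_i) < c$, which forces distinct points to be separated by some coordinate. Compactness of $X$ then promotes $\pi$ to a topological embedding, and $\pi(X)$ is a closed shift-invariant subset of $\{1,\ldots,k\}^{\mZ}$, i.e., a subshift, so $\pi$ is the desired conjugacy.

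For the semisimple refinement, assume $f$ is pointwise positively recurrent, so that every point of $X$ is almost periodic. Almost periodicity is a topological conjugacy invariant, so every point of $\pi(X)$ is almost periodic as well; a symbolic system in which every point is almost periodic decomposes as the disjoint union of its minimal subsystems, which is exactly the definition of semisimple. In contrast to Ma\~n\'e's original argument, where global density of each orbit does the work, the present setting allows orbits to be dense only in their own (possibly proper) minimal closures, so the delicate recurrence-based continua argument in stage one is the genuinely new ingredient.
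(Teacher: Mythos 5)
Your overall architecture (reduce to $\dim X=0$, then code with a clopen partition, then handle semisimplicity) matches the paper's, and the coding stage is fine. But stage one --- the heart of the proof --- is not a proof: the sentence ``playing these two phenomena against each other \dots should produce the required contradiction'' is exactly where all the work lies, and neither of the mechanisms you gesture at survives scrutiny. Pointwise recurrence is a purely pointwise, non-uniform property: each point of a continuum $C$ returns near itself along its \emph{own} sequence of times, and nothing forces a subcontinuum of $C$ to return near itself, so there is no ``family of subcontinua whose forward iterates remain close to the base points'' from which to extract a limit, and a Baire-category argument on the hyperspace has nothing uniform to work with. The paper's actual route is different in kind: rather than deriving a contradiction from the recurrence of the points of $C$, it \emph{constructs a single non-recurrent point}. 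Concretely, it uses Ma\~n\'e's lemma (Lemma \ref{manekeylemma}) to produce a nondegenerate local stable continuum $\Sigma^{s}_{\varepsilon}(a,D)$ meeting $\partial B_{\varepsilon}(a,D)$, upgrades the base point to an almost periodic point $x^{*}$ (Lemma \ref{minpt}), invokes Theorem \ref{mane dim} to see that the minimal set $\Lambda=\overline{orb(x^{*},f)}$ is zero-dimensional and therefore misses a relatively open piece of the stable continuum around some point $y$, and then uses Reddy's hyperbolic metric to push nested subcontinua backwards so that they stay outside $B_{2\gamma}(y,D)$; a point $w$ in the nested intersection has backward orbit avoiding a neighborhood of itself (not negatively recurrent) and forward orbit converging to $\Lambda$ while $d(w,\Lambda)>0$ (not positively recurrent). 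None of these ingredients --- the stable/unstable dichotomy, the adapted hyperbolic metric, the zero-dimensionality of the limiting minimal set --- appears in your sketch, and without them I do not see how to complete your argument.

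A second, smaller gap: you assert that pointwise positive recurrence makes every point of $X$ almost periodic. That implication is false for general compact systems and is precisely what needs proving; it holds here only because, once $X$ is known to be zero-dimensional, the Katznelson--Weiss lemma (Proposition \ref{trans}) applied to each orbit closure (which is transitive because its generating point is positively recurrent) yields minimality of that closure. The distinction matters: Section 5 of the paper exhibits a pointwise \emph{recurrent} transitive non-minimal subshift, so the conclusion genuinely fails when ``positively recurrent'' is weakened to ``recurrent'', and your one-line justification would not detect the difference.
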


Here, we give some remarks on the conditions in Theorem \ref{mainthm}. For $x\in X$, by the recurrence of $x$, we know that the
orbit closure $\overline {\{f^n(x):n\in \mathbb Z\}}$ is topologically transitive. However,  we cannot conclude that
$\overline {\{f^n(x):n\in \mathbb Z\}}$ is minimal in general, though every point of which is recurrent. In fact,
there do exist non-minimal topologically transitive systems which
are pointwise recurrent (see \cite{DY, KW}). Even if the $f$ in Theorem \ref{mainthm} is semisimple, we can only get that the orbit closure of every point is totally disconnected from Theorem \ref{mane}, which does not mean ${\rm dim}(X)=0$.
In the last section, a counter example is constructed to show that the positive recurrence is necessary for obtaining the semisimilicity.
\medskip

The following corollary is immediate.

\begin{cor}\label{non rec}
Let $X$ be a connected compact metric space and let $f$ be an expansive homeomorphism on $X$. If $X$ is not a single point, then $f$ has a non-recurrent point.
\end{cor}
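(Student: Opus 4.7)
The plan is to argue by contradiction: assume every point of $X$ is recurrent and derive that $X$ is totally disconnected, which together with connectedness of $X$ forces $X$ to be a single point.

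First I would invoke Theorem \ref{mainthm}. Since $f$ is expansive and, by the contrapositive assumption, pointwise recurrent, Theorem \ref{mainthm} produces a topological conjugacy between $(X,f)$ and a subsystem $(Y, \sigma|_Y)$ of some full shift $\Sigma_k = \{1, \ldots, k\}^{\mathbb Z}$. In particular $X$ is homeomorphic to a closed subset $Y$ of $\Sigma_k$.

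Next I would use the fact that $\Sigma_k$, endowed with the product topology, is a Cantor space and in particular is totally disconnected. Every subspace of a totally disconnected space is totally disconnected, so $Y$ is totally disconnected, and consequently $X$ is totally disconnected. However, by hypothesis $X$ is connected, so its only connected subsets are the singletons and $X$ itself; being totally disconnected forces every connected subset to be a singleton, whence $X$ must be a single point. This contradicts the assumption that $X$ is not a single point, so there exists a non-recurrent point of $f$.

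There is essentially no obstacle here, since the corollary is a direct consequence of Theorem \ref{mainthm} together with the elementary topological fact that a nontrivial connected space cannot embed into a Cantor space. The only care needed is to record that a symbolic system is totally disconnected and that total disconnectedness is inherited by subspaces; both are standard.
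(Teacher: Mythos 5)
Your proof is correct and follows exactly the route the paper intends: the paper gives no written proof, calling the corollary ``immediate'' from Theorem \ref{mainthm}, and the immediate deduction is precisely your argument that a pointwise recurrent expansive $(X,f)$ embeds in a totally disconnected symbolic space, so a connected $X$ must be a singleton. Nothing further is needed.
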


The following corollary can be deduced from Theorem \ref{mainthm} and the main results in \cite{MY}.

\begin{cor}\label{finite}
Let $X$ be a compact metric space. If $X$ admits a pointwise  positively recurrent homeomorphism  $f$ which is expansive and has the pseudo orbit tracing property, then $X$ is finite.
\end{cor}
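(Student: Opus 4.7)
The plan is to combine the positively recurrent version of Theorem~\ref{mainthm} with the main structural theorem of Mai and Ye \cite{MY}, and then exploit expansivity to force $X$ to be finite.

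First, apply Theorem~\ref{mainthm}: since $f$ is pointwise positively recurrent and expansive, $(X,f)$ is topologically conjugate to a semisimple subsystem of a symbolic system. In particular, $X$ is zero-dimensional and decomposes as a disjoint union of its minimal sets, each of which is a minimal subshift.

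Next, invoke the main structural result of \cite{MY}: a continuous self-map of a compact metric space that is pointwise recurrent and has the pseudo orbit tracing property is equicontinuous. Applied to our $f$, this yields equicontinuity of $f$ on $X$.

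Finally, combine equicontinuity with expansivity. Let $c>0$ be an expansive constant for $f$. By equicontinuity there exists $\delta>0$ such that $d(x,y)<\delta$ implies $d(f^n(x),f^n(y))<c$ for every $n\in\Z$; by expansivity this forces $x=y$. Hence distinct points of $X$ are at least $\delta$ apart, and compactness gives $|X|<\infty$.

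The main obstacle is pinpointing the exact statement in \cite{MY} that applies here and confirming that it yields global equicontinuity of $f$ in our setting. If only a weaker, orbit-wise conclusion is available, I would supplement it using the semisimple, zero-dimensional structure from Theorem~\ref{mainthm}: each minimal set would then be an equicontinuous expansive subshift, hence a single periodic orbit, and a Baire category / Cantor--Bendixson argument applied to $X=\bigcup_{n\ge 1}\mathrm{Fix}(f^n)$, together with the fact that each $\mathrm{Fix}(f^n)$ is finite (a standard consequence of expansivity), would rule out $X$ being infinite.
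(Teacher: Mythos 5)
Your primary route is essentially the paper's: Theorem \ref{mainthm} plus the Mai--Ye theorem, then playing expansivity against equicontinuity. Be aware, though, that the paper only certifies the \emph{minimal-case} part of \cite{MY} (Theorem \ref{rec potp}: a minimal homeomorphism with the pseudo orbit tracing property is conjugate to a subsystem of an adding machine, hence equicontinuous); it does not quote a statement giving global equicontinuity of a pointwise recurrent map with POTP. Accordingly the paper argues orbit by orbit: by the semisimplicity from Theorem \ref{mainthm} each orbit closure is minimal, hence equicontinuous by Theorem \ref{rec potp}, and also expansive, hence finite --- so every point is periodic. Your ``if only an orbit-wise conclusion is available'' fallback correctly reaches this same intermediate conclusion that every minimal set is a single periodic orbit.

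The genuine gap is in how you propose to finish from there. Knowing only that $X=\bigcup_{n\ge 1}\mathrm{Fix}(f^n)$ with each $\mathrm{Fix}(f^n)$ finite and closed does \emph{not} rule out $X$ being infinite, and no Baire category or Cantor--Bendixson argument can: take $X=\{0\}\cup\bigcup_{n\ge 1}C_n\subset\R^2$, where $C_n$ consists of $n$ equally spaced points on the circle of radius $1/n$ and $f$ fixes $0$ and rotates each $C_n$ by angle $2\pi/n$. This is an infinite, compact, zero-dimensional, semisimple, pointwise periodic system in which every $\mathrm{Fix}(f^m)$ is finite; Baire only tells you that some $\mathrm{Fix}(f^n)$ has nonempty interior, i.e.\ that $X$ has isolated periodic points, which is perfectly consistent with $X$ being infinite. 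Of course this example is not expansive --- and that is exactly the point: expansivity must be used once more at the last step, not merely through the finiteness of the sets $\mathrm{Fix}(f^n)$. The paper's fix is precisely this: pointwise periodic implies distal (Proposition \ref{dis per}), and a distal expansive homeomorphism has finite phase space (Proposition \ref{dist+expan}). Replacing your counting argument with these two propositions closes the proof. (Your direct ``equicontinuity forces a uniform separation constant'' argument is fine wherever global equicontinuity is actually available.)
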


\section{preliminaries}

In this section, we will recall some notions and facts around recurrence and expansivity, which will be used in the proof
of the main theorem.

\subsection{Recurrence}

Let $(X, f)$ be a system.  For $x\in X$, the {\it orbit} of $x$ is the set  $orb(x,f):=\{f^{n}(x):n\in\mathbb{Z}\}$; the $\alpha$-{\it limit set} of $x$ is defined to be the set
$$\alpha(x, f):=\{y\in X: \exists\ 0<n_1<n_2<\cdots\ s.t.\ f^{-n_i}(x)\rightarrow y\}$$
 and the $\omega$-{\it limit set} of $x$ is defined to be the set
 $$\omega(x, f):=\{y\in X: \exists\ 0<n_1<n_2<\cdots\ s.t.\ f^{n_i}(x)\rightarrow y\}.$$
The point $x$ is {\it positively recurrent } if $x$ belongs to its $\omega$-limit set; is {\it negatively recurrent } if $x$ belongs to its $\alpha$-limit set; is {\it recurrent } if it is either positively recurrent or negatively recurrent. If $X=\overline{orb(x,f)}$ for some $x\in X$, then $x$ is called a {\it transitive point} and
$f$ is called {\it topologically transitive}. Clearly, a transitive point is a recurrent point.

\medskip

 A subset $E$ of $X$ is $f$-{\it invariant} (or {\it invariant} for short) if $f(E)=E$; we use $f|_E$ to denote the restriction of $f$ to $E$.
 If $E$ is closed and $f$-invariant, then we call $(E, f|_E)$ a {\it subsystem} of $(X, f)$.  From the definitions, we
 see that $orb(x,f)$ is $f$-invariant; both $\alpha(x, f)$ and $\omega(x, f)$ are closed and $f$-invariant.
 If $E$ is closed, invariant, and contains no proper closed invariant subset,
 then $E$ is called a {\it minimal set}. It is clear that
 $E$ is minimal if and only if for every $x\in E$ the orbit $orb(x,f)$ is dense in $E$.
 A point $x\in X$ is {\it almost periodic} if $\overline{orb(x,f)}$ is minimal.
  By an argument of Zorn's Lemma, we know that
 every subsystem contains a minimal set, and hence contains an almost periodic point.
 If $orb(x,f)$ is finite, then $x$ is called a {\it periodic point}. Periodic points are always almost periodic.
 \medskip

 The following proposition can be deduced immediately from the the invariance of $\alpha(x, f)$ and $\omega(x, f)$.

\begin{prop}\label{minimal point}
If $x$ is an almost periodic point, then it is both positively recurrent and negatively recurrent.
\end{prop}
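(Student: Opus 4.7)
The plan is to use the minimality of $\overline{orb(x,f)}$ and the two listed properties of the limit sets (nonemptiness, closedness, and $f$-invariance) to force $\omega(x,f)$ and $\alpha(x,f)$ to coincide with the whole orbit closure, which immediately contains $x$.

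In more detail, let $M=\overline{orb(x,f)}$ and recall that by assumption $M$ is a minimal set. First I would observe that $\omega(x,f)$ is nonempty: by compactness of $X$, the sequence $(f^n(x))_{n\geq 1}$ has a convergent subsequence along some $0<n_1<n_2<\cdots$, producing a point of $\omega(x,f)$. Next, since $f^n(x)\in M$ for every $n\in\mathbb Z$ and $M$ is closed, every limit point of a subsequence of $(f^n(x))$ lies in $M$; hence $\omega(x,f)\subseteq M$. Combined with the fact (already noted in the preliminaries) that $\omega(x,f)$ is a closed $f$-invariant set, minimality of $M$ forces $\omega(x,f)=M$. In particular, $x\in M=\omega(x,f)$, so $x$ is positively recurrent.

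The argument for negative recurrence is verbatim the same with $\omega$ replaced by $\alpha$: a convergent subsequence of $(f^{-n}(x))_{n\geq 1}$ shows $\alpha(x,f)\neq\emptyset$, the inclusion $\alpha(x,f)\subseteq M$ and the closed $f$-invariance of $\alpha(x,f)$ give $\alpha(x,f)=M$ by minimality, and hence $x\in\alpha(x,f)$.

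There is essentially no obstacle here; the proof is a direct application of the definition of minimality, and the only small point to be careful about is confirming that the limit sets are nonempty (handled by compactness) and that they are contained in $M$ (handled by the closedness and $f$-invariance of $M$). Both of these are immediate from the setup given in the preliminaries.
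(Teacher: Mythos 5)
Your proof is correct and follows exactly the route the paper intends: the paper states that the proposition "can be deduced immediately from the invariance of $\alpha(x,f)$ and $\omega(x,f)$," and your argument is precisely that deduction spelled out (nonempty closed invariant subsets of a minimal set must equal the whole set, hence contain $x$). Nothing is missing.
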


The following proposition is due to Gottschalk (see \cite[Theorem 1]{Gott}). Although the recurrence in \cite[Theorem 1]{Gott} is positive recurrence, the following proposition is a direct corollary.

\begin{prop}\label{fnrec}
Let $f$ be a homeomorphism of a compact metric space $(X,d)$ and $n\geq 1$. Then
 a point $x\in X$ is recurrent with respect to $f$ if and only if it is recurrent with respect to $f^{n}$.
\end{prop}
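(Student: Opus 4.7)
The plan is to reduce the statement to the cited theorem of Gottschalk (which concerns only positive recurrence) by unpacking the definition of ``recurrent'' into its two constituent cases, positive recurrence and negative recurrence, and then exploiting the symmetry between $f$ and $f^{-1}$.

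First I would dispose of the easy direction. Suppose $x$ is recurrent with respect to $f^n$. Then there is a strictly increasing sequence of positive integers $(k_i)$ with either $f^{nk_i}(x) \to x$ or $f^{-nk_i}(x) \to x$. Since $n \geq 1$, the sequence $(nk_i)$ is a strictly increasing sequence of positive integers, so in either case $x$ is (positively or negatively) recurrent under $f$.

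For the nontrivial direction, assume $x$ is recurrent with respect to $f$. If $x$ is positively recurrent with respect to $f$, then Gottschalk's theorem applied to the system $(X,f)$ gives directly that $x$ is positively recurrent with respect to $f^n$, hence recurrent with respect to $f^n$. If instead $x$ is negatively recurrent with respect to $f$, the key observation is that negative recurrence of $x$ with respect to a homeomorphism $g$ is the same as positive recurrence of $x$ with respect to $g^{-1}$. Thus $x$ is positively recurrent with respect to $f^{-1}$, and applying Gottschalk's theorem to the system $(X, f^{-1})$ yields that $x$ is positively recurrent with respect to $(f^{-1})^n = (f^n)^{-1}$, i.e., negatively recurrent with respect to $f^n$.

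There is no serious obstacle here beyond this bookkeeping: the whole content is concentrated in the cited positive-recurrence statement of Gottschalk, and the ``direct corollary'' remark in the paper refers precisely to the elementary identity $(f^{-1})^n = (f^n)^{-1}$ combined with the symmetry between positive and negative recurrence under passing from $f$ to $f^{-1}$. If I were worried about anything, it would be only to state the symmetry carefully enough that the reader sees why the two-sided notion of recurrence used throughout the paper is preserved under taking powers in both directions.
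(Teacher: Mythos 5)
Your proposal is correct and is exactly the reduction the paper has in mind: the paper states the result without proof, citing Gottschalk's positive-recurrence theorem and calling the two-sided version a ``direct corollary,'' and your decomposition into the positive case (Gottschalk applied to $f$) and the negative case (Gottschalk applied to $f^{-1}$, using $\alpha(x,f)=\omega(x,f^{-1})$ and $(f^{-1})^n=(f^n)^{-1}$), together with the trivial converse, is precisely that corollary spelled out.
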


The following proposition is due to Katznelson and Weiss (see \cite[Lemma 2.1]{KW}).

\begin{prop}\label{trans}
Let $f$ be a homeomorphism of a compact metric space $X$ of dimension $0$. If $f$ is both pointwise positively recurrent and topologically transitive, then $f$ is minimal.
\end{prop}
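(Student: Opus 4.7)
The plan is to argue by contradiction. Suppose $f$ is not minimal; then by Zorn's lemma $X$ contains a minimal set $M$ that is a proper subset of $X$. Using zero-dimensionality together with compact Hausdorffness, I would separate the closed set $M$ from a chosen point of $X \setminus M$ by a clopen set, obtaining a clopen $U$ with $M \subseteq U$ and $U \neq X$, and set $V := X \setminus U$, a nonempty clopen set disjoint from $M$.

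The central technical step is a uniform bound on the first return time to $V$. For each $x \in V$, positive recurrence combined with the fact that $V$ is an open neighborhood of $x$ makes $\tau(x) := \min\{n \geq 1 : f^n(x) \in V\}$ a well-defined positive integer. Because $U$ and $V$ are clopen, each level set $\{x \in V : \tau(x) = k\} = V \cap f^{-1}(U) \cap \cdots \cap f^{-(k-1)}(U) \cap f^{-k}(V)$ is clopen, and these sets form a pairwise disjoint clopen cover of the compact set $V$. Compactness forces only finitely many to be nonempty, yielding a uniform bound $\tau \leq N$.

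Next I would exploit transitivity. Choose a transitive point $x_0$. Since $x_0 \in \omega(x_0, f)$ and $\omega(x_0, f)$ is closed and $f$-invariant, it must contain $\overline{orb(x_0, f)} = X$, so $\omega(x_0, f) = X$; in particular the forward orbit of $x_0$ is already dense. Pick $m \geq 0$ with $z_0 := f^m(x_0) \in V$; the forward orbit of $z_0$ is a tail of that of $x_0$, so it too is dense. The bound $\tau \leq N$ now forces every window $[t, t+N-1]$ with $t \geq 1$ to contain a visit of the forward orbit of $z_0$ to $V$. To contradict this, I would pick $y \in M$ and extract $k_j \to \infty$ with $f^{k_j}(z_0) \to y$; by continuity of each of $f^0, f^1, \ldots, f^{N-1}$ and the fact that $f^i(y) \in M \subseteq U$ with $U$ open, we obtain $f^{k_j + i}(z_0) \in U$ for every $i \in \{0, \ldots, N-1\}$ once $j$ is sufficiently large, so the window $[k_j, k_j + N - 1]$ contains no visit of $z_0$'s forward orbit to $V$.

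The principal obstacle is precisely the uniform bound $N$; pointwise positive recurrence alone supplies only pointwise finite return times, and zero-dimensionality is what upgrades this to local constancy of $\tau$ on $V$ and hence, via compactness of $V$, to the uniform bound. Once that bound is in hand, the density of $z_0$'s forward orbit near any point of $M$ delivers the contradiction essentially automatically.
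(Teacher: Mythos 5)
Your proof is correct. Note that the paper does not prove Proposition \ref{trans} at all --- it is quoted from Katznelson--Weiss \cite[Lemma 2.1]{KW} --- so there is no in-paper argument to compare against; your write-up is essentially the standard (and essentially the original) argument. All the delicate points are handled properly: the passage from two-sided transitivity to density of the forward orbit via the invariance of $\omega(x_0,f)$; the fact that in a zero-dimensional compact metric space the level sets $\{x\in V:\tau(x)=k\}=V\cap f^{-1}(U)\cap\cdots\cap f^{-(k-1)}(U)\cap f^{-k}(V)$ are clopen, so that compactness of $V$ upgrades the pointwise finiteness of the return time (which is exactly where pointwise \emph{positive} recurrence is used) to a uniform bound $N$; and the final contradiction, where a window of length $N$ around a time $k_j$ with $f^{k_j}(z_0)$ close to the minimal set $M\subseteq U$ misses $V$ entirely. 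The only cosmetic remark is that "the forward orbit of $z_0$ is a tail of that of $x_0$, so it too is dense" is most cleanly justified by observing $\omega(z_0,f)=f^m(\omega(x_0,f))=X$, which is what you actually need to extract $k_j\to\infty$ with $f^{k_j}(z_0)\to y\in M$.
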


\subsection{Expansivity and hyperbolic metrics}
Suppose $f$ is a homeomorphism on a compact metric space $X$ with metric $d$. We say  $f$ is  {\it expansive} if there is  $c>0$ such that
 $\sup_{n\in \mathbb{Z}} d(f^{n}(x) , f^{n}(y))>c,$
 for any distinct points $x,y\in X$; we call $c$  an {\it expansivity constant} for $f$.
The following proposition can be seen in \cite{Ao} and is easy to be checked.

\begin{prop}\label{fnexp}
Let $n$ be a positive integer. Then $f$ is expansive if and only if $f^{n}$ is expansive.
\end{prop}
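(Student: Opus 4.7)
The plan is to prove the two implications separately, with one direction essentially trivial and the other requiring a uniform continuity argument.

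For the easy direction, I would show that if $f^n$ is expansive then so is $f$. Indeed, suppose $c>0$ is an expansivity constant for $f^n$. For any two distinct points $x,y\in X$, there exists $k\in\mathbb Z$ with $d((f^n)^k(x),(f^n)^k(y))>c$, i.e.\ $d(f^{nk}(x),f^{nk}(y))>c$. Hence $\sup_{m\in\mathbb Z} d(f^m(x),f^m(y))>c$, so $c$ is also an expansivity constant for $f$.

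For the converse, assume $f$ is expansive with constant $c>0$. The idea is to reduce separation under $f$ to separation under $f^n$ using uniform continuity. Since $X$ is compact, each of $f,f^2,\ldots,f^{n-1}$ is uniformly continuous, so I can choose $\delta\in(0,c)$ small enough that for every $j\in\{0,1,\ldots,n-1\}$, whenever $d(u,v)<\delta$ one has $d(f^j(u),f^j(v))\le c$. Now fix distinct $x,y\in X$ and pick $m\in\mathbb Z$ with $d(f^m(x),f^m(y))>c$. Writing $m=kn+r$ with $0\le r<n$ by the Euclidean algorithm, put $u=f^{kn}(x)$, $v=f^{kn}(y)$. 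If $d(u,v)<\delta$ held, then $d(f^r(u),f^r(v))=d(f^m(x),f^m(y))\le c$, contradicting $d(f^m(x),f^m(y))>c$. Therefore $d((f^n)^k(x),(f^n)^k(y))=d(u,v)\ge\delta$, and in particular $\sup_{k\in\mathbb Z}d((f^n)^k(x),(f^n)^k(y))\ge\delta>\delta/2$. So $\delta/2$ is an expansivity constant for $f^n$.

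There is no real obstacle here; the only subtle point is handling the remainder $r$ in the division $m=kn+r$ so as to pass from an $f$-time that witnesses expansivity to a nearby $f^n$-time. The uniform continuity of the finitely many iterates $f^0,\ldots,f^{n-1}$, valid because $X$ is compact, is precisely what lets the remainder be absorbed into a uniform loss of separation from $c$ to $\delta$.
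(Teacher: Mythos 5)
Your proof is correct and is exactly the standard argument; the paper itself gives no proof of this proposition (it merely cites Aoki and remarks that it is "easy to be checked"). Both directions are handled properly, including the use of uniform continuity of $f^0,\ldots,f^{n-1}$ to absorb the remainder $r$ in $m=kn+r$, so nothing further is needed.
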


 For $x\in X$ and $r>0$, let $B_{r}(x,d)$  denote the open ball of radius $r$ centering at $x$ with respect to metric $d$, i.e.
$ B_{r}(x,d)=\{ y\in X: d(x,y)<r\}.$
For $x\in X$ and $\varepsilon>0$, let
\begin{eqnarray*}
& & W_{\varepsilon}^{s}(x,d)=\{y\in X: \  d(f^n(x), f^n(y))\leq\varepsilon, \forall n\geq 0\}, \\
& & W_{\varepsilon}^{u}(x,d)=\{y\in X: \  d(f^{-n}(x), f^{-n}(y))\leq\varepsilon, \forall n\geq 0\}.
\end{eqnarray*}
The sets $W_{\varepsilon}^{s}(x, d)$ and $W_{\varepsilon}^{u}(x, d)$ are called respectively the
{\it local stable set} and the {\it local unstable sets} of scale $\varepsilon$ at $x$.
For $x\in X$, the {\it stable set} $W^{s}(x,d)$ and the {\it unstable set} $W^{u}(x,d)$ are defined by
\begin{eqnarray*}
& &  W^{s}(x,d)=\{y\in X: \lim\limits_{n\rightarrow \infty}d(f^n(x), f^n(y))=0\},\\
& & W^{u}(x,d)=\{y\in X: \lim\limits_{n\rightarrow \infty}d(f^{-n}(x), f^{-n}(y))=0\}.
\end{eqnarray*}

The following proposition is Proposition 2.39 in \cite{Ao}.

\begin{prop}\label{stable set}
If $f$ is expansive with a expansivity constant $c$ and $0<\varepsilon<c$, then
\[W^{s}(x,d)=\bigcup_{n\geq 0}f^{-n}(W_{\varepsilon}^{s}(f^n(x),d),\ \   W^{u}(x,d)=\bigcup_{n\geq 0}f^{n}(W_{\varepsilon}^{u}(f^{-n}(x),d).\]
\end{prop}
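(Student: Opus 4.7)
The plan is to prove the two set equalities by establishing each inclusion separately, and to observe that the unstable version follows from the stable one applied to $f^{-1}$, since an expansivity constant for $f$ is also an expansivity constant for $f^{-1}$ and $W^u(x,d)$ with respect to $f$ is $W^s(x,d)$ with respect to $f^{-1}$. So I focus on the first identity.

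For the easy inclusion $\bigcup_{n\geq 0} f^{-n}(W^s_\varepsilon(f^n(x),d)) \subseteq W^s(x,d)$, actually this is the nontrivial direction and is where expansivity is used. Let me swap: the easy direction is $W^s(x,d) \subseteq \bigcup_{n\geq 0} f^{-n}(W^s_\varepsilon(f^n(x),d))$. Take $y \in W^s(x,d)$, so $d(f^k(x), f^k(y)) \to 0$. Then there is $N \geq 0$ such that $d(f^{N+k}(x), f^{N+k}(y)) \leq \varepsilon$ for all $k \geq 0$, i.e., $f^N(y) \in W^s_\varepsilon(f^N(x),d)$, which gives $y \in f^{-N}(W^s_\varepsilon(f^N(x),d))$. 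This direction uses neither expansivity nor $\varepsilon < c$.

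The main step is the other inclusion, which genuinely needs both expansivity and $\varepsilon < c$. Suppose $y \in f^{-n}(W^s_\varepsilon(f^n(x),d))$ for some $n \geq 0$, so $d(f^j(x), f^j(y)) \leq \varepsilon$ for every $j \geq n$. I need to upgrade this ``eventually $\varepsilon$-close'' statement to ``asymptotically $0$-close''. I plan to argue by contradiction: assume $d(f^{j_i}(x), f^{j_i}(y)) \geq \delta$ for some $\delta > 0$ and some subsequence $j_i \to \infty$. Using compactness of $X$, I extract a further subsequence along which $f^{j_i}(x) \to p$ and $f^{j_i}(y) \to q$, with $d(p,q) \geq \delta$, so $p \neq q$. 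Now for any fixed integer $m \in \mathbb{Z}$, once $i$ is large enough that $j_i + m \geq n$, we have $d(f^{j_i+m}(x), f^{j_i+m}(y)) \leq \varepsilon$; passing to the limit via continuity of $f^m$ gives $d(f^m(p), f^m(q)) \leq \varepsilon < c$ for every $m \in \mathbb{Z}$. This contradicts the fact that $c$ is an expansivity constant, completing the argument.

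The only subtlety I anticipate is the ``both directions in time'' point in the contradiction step: one must verify that the inequality $d(f^{j_i+m}(x), f^{j_i+m}(y)) \leq \varepsilon$ is valid for negative $m$ too, which is fine because $j_i \to \infty$ so eventually $j_i + m \geq n$ regardless of the sign of $m$. Once this is in place, the contradiction with expansivity is immediate from the choice $\varepsilon < c$. The identity for $W^u(x,d)$ then follows by applying the identity for $W^s$ to the expansive homeomorphism $f^{-1}$, completing the proof.
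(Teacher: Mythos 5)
Your proof is correct: the easy inclusion, the compactness-plus-expansivity contradiction argument for the hard inclusion (including the check that $j_i+m\geq n$ eventually for negative $m$), and the reduction of the unstable identity to the stable one for $f^{-1}$ are all sound. The paper does not prove this statement itself but quotes it as Proposition 2.39 of Aoki's survey, and your argument is the standard one given there, so there is nothing to compare beyond noting agreement.
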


\begin{thm}\cite[Theorem 1]{Reddy}\label{hypermetric}
Let $f$ be an expansive homeomorphism on a compact metric space $X$. Then there is a compatible metric $D$ on $X$,
$\gamma>0, 0<\lambda<1$ and $a\geq 1$ such that for any $x\in X$
\begin{itemize}
\item [(i)] if $y\in W_{\gamma}^{s}(x, D)$, then
\[ D(f^n(x), f^{n}(y))\leq a\lambda^{n} D(x,y),\ \ \ \text{for any } n\geq 0;\]
\item [(ii)] if $y\in W_{\gamma}^{u}(x, D)$, then
\[ D(f^{-n}(x), f^{-n}(y))\leq a\lambda^{n} D(x,y),\ \ \ \text{for any } n\geq 0.\]
\end{itemize}
\end{thm}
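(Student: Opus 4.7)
The plan is to construct $D$ from the original metric $d$ by a weighted-orbit sum so that the exponential rate $\lambda$ is encoded directly in the weights. The key input from expansivity is its uniform form: for every $\epsilon>0$ there exists $N=N(\epsilon)\in\N$ such that if $d(f^i(x),f^i(y))\le c$ for all $|i|\le N$, then $d(x,y)\le\epsilon$. This is a standard compactness/contradiction argument — limits of would-be counterexamples produce two distinct points whose full bi-infinite orbit stays $c$-close, violating expansivity. With this in hand I would fix a small cut-off $\gamma_0\in(0,c)$ and pass to a high iterate $g=f^M$ (still expansive, by Proposition~\ref{fnexp}) that exhibits one-step coarse contraction on local stable sets at some ratio $\mu\in(0,1)$, and symmetrically one-step coarse expansion for $g^{-1}$ on local unstable sets.

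Having $g$, define
\[
D(x,y)\;=\;\sum_{n\ge 0}\mu^n\min\!\bigl(d(g^n(x),g^n(y)),\gamma_0\bigr)+\sum_{n\ge 0}\mu^n\min\!\bigl(d(g^{-n}(x),g^{-n}(y)),\gamma_0\bigr),
\]
call the two summands $D^s$ and $D^u$. Symmetry, the triangle inequality, and boundedness by $2\gamma_0/(1-\mu)$ are immediate; the $n=0$ terms give $D\ge 2\min(d,\gamma_0)$, so $D$ is a genuine metric. Compatibility with the original topology follows because $D\ge d$ on a neighborhood of the diagonal, while $D\to 0$ as $d\to 0$ by uniform continuity of the finitely many early iterates combined with $\mu^n\to 0$. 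The exponential estimates come from the one-line index-shift identity
\[
D^s(g(x),g(y))\;=\;\mu^{-1}\!\bigl(D^s(x,y)-\min(d(x,y),\gamma_0)\bigr),
\]
together with an analogue for $D^u$ under $g^{-1}$. On a sufficiently small $D$-local stable set the one-step coarse contraction of $g$ forces the subtracted term to dominate the tail of the sum, converting the identity into a genuine contraction $D^s(g(x),g(y))\le\mu^{1/2}D^s(x,y)$; the $D^u$-piece contributes only a lower-order remainder. Passing from $g=f^M$ back to $f$ then yields the final rate $\lambda=\mu^{1/(2M)}$ with a multiplicative constant $a\ge 1$ absorbing the $M-1$ partial iterates.

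The main obstacle is the simultaneous calibration of $\gamma,\gamma_0,\mu,M$: they must be chosen so that the subtracted term in the identity truly dominates the tail of the sum for \emph{every} pair in the $D$-local stable set (rather than only in a possibly strictly smaller $d$-local stable set). This links the local-stable-set scale $\gamma$ in the conclusion to the coarse-contraction estimate for $g$ supplied by uniform expansivity; once these constants are chosen coherently, the symmetric unstable case and the interpolation from $g$ back to $f$ amount to routine bookkeeping.
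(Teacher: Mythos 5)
This theorem is quoted from Reddy's paper and the present article gives no proof of it, so I am judging your construction on its own terms --- and it has a genuine gap, located exactly at the point you yourself flag as ``the main obstacle.'' The weighted-orbit-sum metric $D^s(x,y)=\sum_{n\ge 0}\mu^n\min\big(d(g^nx,g^ny),\gamma_0\big)$ cannot deliver exponential contraction, because uniform expansivity only gives $\sup_{x}\diam\big(f^n(W^{s}_{\gamma_0}(x,d))\big)\to 0$ with \emph{no rate whatsoever}; in particular no power $g=f^M$ satisfies a pointwise estimate $d(g(x),g(y))\le\mu\,d(x,y)$ on local stable sets, so the ``one-step coarse contraction at ratio $\mu$'' you invoke is either false (pointwise version) or too weak (diameter version) to make the first term dominate the tail. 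A concrete obstruction: take the full $2$-shift with its standard metric $d_0$ and replace it by the compatible metric $d=h\circ d_0$, $h(t)=1/\log(e/t)$, which is increasing and concave with $h(0)=0$, hence again a metric inducing the same topology. For $y$ in the stable set of $x$ with $d_0(x,y)=2^{-L}$ one computes $c_n:=\min\big(d(g^nx,g^ny),\gamma_0\big)\asymp 1/(Mn+L)$. Then $D^s(g^kx,g^ky)=\sum_{m\ge0}\mu^m c_{k+m}\ge c_k\asymp 1/(Mk+L)$ while $D^s(x,y)\le \mathrm{const}\cdot c_0/(1-\mu)\asymp 1/L$, so the ratio $D^s(g^kx,g^ky)/D^s(x,y)$ decays like $L/(Mk+L)$ --- polynomially, not exponentially. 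Equivalently, along such pairs the quantity $\min(d(x,y),\gamma_0)/D^s(x,y)$ is only about $1-\mu$, which turns your index-shift identity into mere non-expansion $D^s(gx,gy)\le D^s(x,y)$ rather than a contraction by $\mu^{1/2}$. No calibration of $\gamma$, $\gamma_0$, $\mu$, $M$ can repair this, since the defect is the total absence of a decay rate for $d$ along stable sets in the hypotheses.

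The missing idea is that the adapted metric must be built from the combinatorial \emph{separation time} $n(x,y)=\sup\{n\ge0: d(f^ix,f^iy)\le c \text{ for all } |i|\le n\}$, not from the numerical values of $d$ along the orbit. Reddy's argument applies Frink's metrization lemma to the neighborhoods of the diagonal $V_n=\{(x,y): n(x,y)\ge n\}$, after passing to a sparse subsequence (this is where uniform expansivity and powers of $f$ enter) so that $V_{n+1}\circ V_{n+1}\circ V_{n+1}\subset V_n$; the resulting metric satisfies $V_{n+1}\subset\{D<2^{-n}\}\subset V_{n-1}$. Hyperbolicity is then automatic: if $y\in W^{s}_{\gamma}(x,D)$ then applying $f^k$ increases the separation time by exactly $k$, so $D(f^kx,f^ky)\le C\,2^{-k}D(x,y)$. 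Your opening move --- uniform expansivity plus passage to a power --- is the right starting point, but the weighted sum of $d$-values has to be replaced by a separation-time construction of this kind.
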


\subsection{Expansivity and topological dimension}

For $x\in X$ and $\varepsilon>0$, let $\Sigma_{\varepsilon }^{s}(x,d)$ (resp. $\Sigma_{\varepsilon }^{u}(x,d)$) denote the connected component of  $W_{\varepsilon}^{s}(x, d)\cap \overline{B_{\varepsilon}(x, d)}$ (resp. $W_{\varepsilon}^{u}(x, d)\cap \overline{B_{\varepsilon}(x, d)}$) containing $x$.
\medskip

The following two results are established by  Ma\~n\'e in \cite{Mane}.

\begin{thm} \label{mane dim}
Let $X$ be a compact metric space. If $X$ admits a minimal expansive homeomorphism $f$,
then  ${\rm dim}(X)=0$.
\end{thm}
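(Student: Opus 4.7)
The plan is a proof by contradiction: assume $\dim(X)\geq 1$ and derive a contradiction with the minimality of $f$. Replace $d$ by the hyperbolic metric $D$ from Theorem \ref{hypermetric}, with constants $\gamma$, $0<\lambda<1$, $a\geq 1$, so that local stable (resp. unstable) continua contract exponentially under forward (resp. backward) iteration. Fix an expansivity constant $c$ for $f$ and choose $0<\varepsilon<\min\{\gamma,c/3\}$. The strategy has three stages: use $\dim(X)\geq 1$ to locate a non-degenerate local stable or unstable continuum at some base point; use minimality to propagate this phenomenon throughout $X$; and iterate a dimension-reducing quotient construction until one contradicts the finite-dimensionality of $X$ supplied by Mañé's theorem \cite{Mane}.

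The technical heart is the first stage: the claim that there exist $x_0\in X$ and $\alpha>0$ with $\diam_D\Sigma_\varepsilon^s(x_0,D)\geq\alpha$ or $\diam_D\Sigma_\varepsilon^u(x_0,D)\geq\alpha$. I would prove it by starting from any non-degenerate continuum $C\subset X$ (available since $\dim(X)\geq 1$) and tracking the sequence $\{\diam_D(f^n(C))\}_{n\in\mathbb{Z}}$. Expansivity forbids all these diameters from remaining below $c$, so there is a first positive index $N$ with $\diam_D(f^N(C))\geq\varepsilon$ (or the symmetric situation in negative time). A careful extraction of a non-degenerate subcontinuum $C'\subset f^{N-1}(C)$ whose entire forward orbit stays within $\varepsilon$ of the forward orbit of a chosen base point $x_0\in C'$ places $C'$ inside $\Sigma_\varepsilon^s(x_0,D)$; the mirror argument in negative time gives the unstable case. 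I expect this continuum-theoretic extraction to be the principal obstacle, since the simultaneous control of diameters, base points, and connectedness of the subcontinuum is the delicate technical core of Mañé's original approach.

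For the remaining stages, assume without loss of generality that $K:=\Sigma_\varepsilon^u(x_0,D)$ has $\diam_D(K)\geq\alpha$. Theorem \ref{hypermetric}(ii) gives $\diam_D(f^{-n}K)\leq a\lambda^n\alpha\to 0$ with each $f^{-n}K$ non-degenerate; minimality forces $\{f^{-n}(x_0):n\geq 0\}$ to be dense in $X$, so the shrinking non-degenerate continua $f^{-n}K$ accumulate on every $y\in X$. An upper-semicontinuity argument combined with minimality and the $f$-equivariance of $x\mapsto\Sigma_\varepsilon^u(x,D)$ upgrades this to a uniform bound $\diam_D\Sigma_\varepsilon^u(x,D)\geq\alpha'$ for some $\alpha'>0$ and every $x\in X$. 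Finally, form the closed $f$-invariant equivalence relation on $X$ whose classes are the local unstable continua of diameter at least $\alpha'$; the quotient $Y$ is a compact metric space inheriting a minimal expansive homeomorphism from $f$, with $\dim(Y)<\dim(X)$ because each equivalence class is a non-degenerate continuum. Iterating this dimension-reducing quotient terminates in finitely many steps by Mañé's finite-dimensionality theorem \cite{Mane}; the terminal zero-dimensional minimal expansive quotient admits no non-degenerate local unstable continua, contradicting the propagation established at the penultimate level and thereby forcing $\dim(X)=0$.
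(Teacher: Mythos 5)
First, a point of comparison: the paper does not prove this statement at all --- Theorem \ref{mane dim} is quoted from Ma\~n\'e's paper \cite{Mane} as a known ingredient --- so your proposal must stand on its own, and it does not.

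The fatal gap is your third stage. The sets $\Sigma_{\varepsilon}^{u}(x,D)$ do not partition $X$: the relation ``$y\in\Sigma_{\varepsilon}^{u}(x,D)$'' is not transitive, and for the component version not even symmetric, and two such continua can overlap without coinciding. So ``the closed $f$-invariant equivalence relation whose classes are the local unstable continua'' is simply not defined; if you pass to the equivalence relation it generates, the classes can chain together into dense sets (think of unstable sets of an Anosov system), and the quotient degenerates or fails to be Hausdorff. Even granting a genuine upper-semicontinuous monotone decomposition, two further steps are unsupported: expansivity is not preserved under factor maps, so $Y$ need not carry an expansive homeomorphism; and there is no theorem of dimension theory asserting that collapsing a decomposition into non-degenerate continua strictly lowers dimension. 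Finally, even if every one of these steps were repaired, the endgame produces no contradiction: arriving after finitely many collapses at a zero-dimensional quotient that has no non-trivial unstable continua is perfectly consistent with the penultimate space having them, so nothing forces $\dim(X)=0$.

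There is also a directional error in your second stage: you propagate using $f^{-n}K$, but by Theorem \ref{hypermetric}(ii) these continua \emph{shrink}, and a sequence of continua of vanishing diameter accumulating at $y$ says nothing about $\Sigma_{\varepsilon}^{u}(y,D)$. Unstable continua must be pushed \emph{forward}, where they grow to a definite size before leaving the local unstable set, after which one takes Hausdorff limits along a dense forward orbit; that propagation can be made to work. Your first stage is essentially Lemma \ref{manekeylemma} and is the right idea in outline (stopping time plus a Hausdorff-limit extraction), though only sketched. But the actual proofs of this theorem (Ma\~n\'e's, and the related argument this paper runs for Theorem \ref{mainthm}) derive the contradiction \emph{directly} from the existence of a non-trivial local stable or unstable continuum --- by repeatedly pulling back subcontinua of controlled diameter, as in the proof of Theorem \ref{mainthm}, to manufacture a point whose orbit violates recurrence or whose asymptotics violate minimality --- with no passage to quotient spaces. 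You need an argument of that direct type; the dimension-reducing quotient scheme cannot be salvaged.
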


The following lemma is crucial in the proof of Theorem \ref{mane dim}.

\begin{lem}\label{manekeylemma}
Let $f$ be an expansive homeomorphism of a compact metric space $(X,d)$. If $\dim(X)>0$, then there is $\varepsilon_0>0$ such that for any $\varepsilon\in(0,\varepsilon_0)$, there is some point $a\in X$ such that
\[ \Sigma_{\varepsilon}^{s}(a, d)\cap \partial B_{\varepsilon}(a,d)\neq\emptyset, \ \ \ \text{or }\ \ \ \Sigma_{\varepsilon}^{u}(a, d)\cap \partial B_{\varepsilon}(a,d)\neq\emptyset.\]
\end{lem}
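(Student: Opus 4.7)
The plan is to establish the contrapositive: assuming that for every $\varepsilon_0>0$ there exists $\varepsilon\in(0,\varepsilon_0)$ such that for all $a\in X$ both $\Sigma_\varepsilon^s(a,d)\cap\partial B_\varepsilon(a,d)=\emptyset$ and $\Sigma_\varepsilon^u(a,d)\cap\partial B_\varepsilon(a,d)=\emptyset$, I want to deduce $\dim(X)=0$. Since topological dimension depends only on the topology, the first move is to replace $d$ with the hyperbolic metric $D$ from Theorem~\ref{hypermetric}: equivalence of metrics transfers the non-bumping hypothesis up to a comparable scale of radii, while the exponential contraction of local stable and unstable sets under $D$ makes the subsequent construction cleaner.

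Next, I would invoke the standard local product structure available in any expansive system (a classical consequence of expansivity together with Theorem~\ref{hypermetric}): there exist $\eta,\delta>0$ such that for $D(x,y)<\delta$ the intersection $W_\eta^s(x,D)\cap W_\eta^u(y,D)$ is a single point $[x,y]$, and the bracket $(x,y)\mapsto[x,y]$ is continuous on its domain. This provides the canonical coordinate system I use to build small clopen neighborhoods.

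For a point $a\in X$ and $\varepsilon$ smaller than both $\eta$ and the scale supplied by the non-bumping hypothesis, I would form the rectangle
$$R_\varepsilon(a)=\bigl\{[x,y]:x\in\Sigma_\varepsilon^u(a,D),\ y\in\Sigma_\varepsilon^s(a,D)\bigr\}.$$
Because $\Sigma_\varepsilon^s(a,D)$ and $\Sigma_\varepsilon^u(a,D)$ are compact and, by the non-bumping hypothesis, contained in the open ball $B_\varepsilon(a,D)$, the set $R_\varepsilon(a)$ is compact; continuity of the bracket together with $[a,a]=a$ gives that it contains a neighborhood of $a$, and its diameter tends to zero as $\varepsilon\to 0$. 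The heart of the proof is the verification that $R_\varepsilon(a)$ is also open in $X$: any point on its topological boundary would trace back, via the local product coordinates, to some nearby base point $b$ at which a local stable or unstable component touches $\partial B_\varepsilon(b,D)$, contradicting the uniform non-bumping assumption.

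Granting openness, $R_\varepsilon(a)$ is clopen of arbitrarily small diameter, so $a$ has a basis of clopen neighborhoods; since $a$ was arbitrary, $\dim(X)=0$, the desired contradiction. The main obstacle is clearly the openness of $R_\varepsilon(a)$: it requires a careful analysis of how $\Sigma_\varepsilon^s$ and $\Sigma_\varepsilon^u$ vary with the base point together with continuity of the bracket, and it is precisely where the uniformity in $a$ of the non-bumping hypothesis is consumed.
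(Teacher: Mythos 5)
The statement you are proving is quoted by the paper from Ma\~n\'e's article \cite{Mane} and is not reproved there, so the relevant comparison is with Ma\~n\'e's original argument, which is entirely different from yours: he works directly with a nondegenerate continuum $C\subset X$ (which exists because $\dim(X)>0$), uses expansivity to rule out that all iterates $f^n(C)$ keep diameter below the expansivity constant, and by repeatedly passing to subcontinua of diameter exactly $\varepsilon$ (Boundary Bumping) and taking a Hausdorff limit he produces a nondegenerate continuum $A$ with $\diam(A)=\varepsilon$ and $\diam(f^n(A))\le\varepsilon$ for all $n\ge 0$ (or for all $n\le 0$); choosing $a,b\in A$ with $d(a,b)=\varepsilon$ then yields $A\subset\Sigma^{s}_{\varepsilon}(a,d)$ (or $\Sigma^{u}_{\varepsilon}(a,d)$) and $b\in\partial B_{\varepsilon}(a,d)$. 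No change of metric and no product structure enters.

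Your proposal has a fatal gap at its second step: the ``standard local product structure'' you invoke is \emph{not} a consequence of expansivity. The existence of canonical coordinates $[x,y]=W^{s}_{\eta}(x,D)\cap W^{u}_{\eta}(y,D)$ for all nearby $x,y$ is an additional hypothesis, equivalent (for expansive homeomorphisms) to the pseudo orbit tracing property; Reddy's theorem (Theorem \ref{hypermetric}) supplies only the hyperbolic metric, not the bracket. Most expansive systems lack it --- for an infinite minimal subshift (e.g.\ a Sturmian system) one has $W^{s}_{\eta}(x,D)\cap W^{u}_{\eta}(y,D)=\emptyset$ for typical nearby pairs, since otherwise the system would have canonical coordinates, hence the tracing property, and by Theorem \ref{rec potp} together with Proposition \ref{dist+expan} it would be finite. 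So the rectangle $R_{\varepsilon}(a)$ cannot even be defined in general. Even granting product structure, two further steps fail: (i) you build $R_{\varepsilon}(a)$ from the connected components $\Sigma^{u}_{\varepsilon}(a,D)$ and $\Sigma^{s}_{\varepsilon}(a,D)$ rather than from the full local stable and unstable sets, so $R_{\varepsilon}(a)$ may collapse to $\{a\}$ (the components can be singletons) and then contains no neighborhood of $a$; and (ii) the openness of $R_{\varepsilon}(a)$, which you yourself identify as the heart of the matter, is asserted rather than proved, and the non-bumping hypothesis controls only whether a component reaches $\partial B_{\varepsilon}(b,D)$, not how these components vary with the base point $b$. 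The argument should be restructured along Ma\~n\'e's direct continuum-iteration lines.
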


\subsection{Distality}

Let $f$ be a homeomorphism of a compact metric space $X$ with metric $d$.
If for any distinct points $x,y\in X$, we have $\inf_{n\in\mathbb{Z}}d(f^{n}(x), f^{n}(y))>0$, then $f$ is said to be {\it distal}.
\medskip

From the definition, we immediately have

\begin{prop}\label{dis per}
If $f$ is pointwise periodic, then $f$ is diatal.
\end{prop}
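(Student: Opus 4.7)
The plan is to use the fact that pointwise periodicity forces the orbit of any pair $(x,y)$ under the diagonal action of $f$ to be a finite set, from which a positive lower bound on distances follows automatically.

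First I would fix two distinct points $x, y \in X$ and let $p, q \geq 1$ be the periods of $x$ and $y$ respectively, so that $f^p(x) = x$ and $f^q(y) = y$. Setting $N = pq$ (or the least common multiple), we get $f^{N}(x) = x$ and $f^{N}(y) = y$, which means the sequence of pairs $\{(f^n(x), f^n(y)) : n \in \mathbb{Z}\}$ is $N$-periodic, hence consists of at most $N$ distinct elements.

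Next I would observe that, since $f$ is a bijection, $f^n(x) \neq f^n(y)$ for every $n \in \mathbb{Z}$ (otherwise injectivity of $f^{-n}$ would force $x = y$). Therefore each of the finitely many pairs $(f^n(x), f^n(y))$ contributes a strictly positive distance, so
\[
\inf_{n \in \mathbb{Z}} d(f^n(x), f^n(y)) = \min_{0 \leq n < N} d(f^n(x), f^n(y)) > 0,
\]
which is exactly the distality condition.

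There is essentially no obstacle here; the statement is almost immediate from the definition once one notices the pair orbit is finite. The only point worth being careful about is that pointwise periodicity is used for \emph{both} $x$ and $y$ simultaneously, so a common period must be chosen before finiteness of the pair orbit is invoked.
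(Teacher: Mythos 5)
Your argument is correct and is exactly the immediate definitional verification the paper has in mind (the paper states this proposition with no written proof, calling it immediate from the definitions): the diagonal orbit of a pair of periodic points is finite, each pair is off-diagonal by injectivity of $f^n$, so the infimum is a minimum of finitely many positive numbers. Nothing further is needed.
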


The following lemma is well known. It also holds for any finitely generated group actions (see \cite{LSXX}).

\begin{prop}\cite[Proposition 2.7.1]{BS}\label{dist+expan}
 If $f$ is both distal and expansive, then $X$ is finite.
\end{prop}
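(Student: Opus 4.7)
My plan is to exploit the interaction of distality and expansivity: distality eliminates all local stable/unstable behavior, forcing zero-dimensionality, and then a symmetry-averaging argument via the Ellis semigroup yields an $f$-invariant compatible metric under which $f$ is an isometry; expansivity then immediately forces $X$ to be finite.

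First, I would observe that distality forces $W^s(x,d)=W^u(x,d)=\{x\}$ for every $x\in X$, since a $y\ne x$ in $W^s(x,d)$ would satisfy $d(f^n x,f^n y)\to 0$, violating $\inf_n d(f^n x,f^n y)>0$. Passing to Reddy's hyperbolic metric $D$ from Theorem \ref{hypermetric}, the contraction $D(f^n x,f^n y)\le a\lambda^n D(x,y)$ on $W_\gamma^{s}(x,D)$ combined with the previous observation yields $W_\gamma^{s}(x,D)=\{x\}$, and Proposition \ref{stable set} propagates this to $W_\varepsilon^{s}(x,D)=\{x\}$ for every $x$ and every $\varepsilon$ below the expansivity constant of $f$ in $D$; the unstable statement is symmetric. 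Since $\Sigma_\varepsilon^{s}(x,D)\subseteq W_\varepsilon^{s}(x,D)=\{x\}$ (analogously for $\Sigma_\varepsilon^u$) cannot meet $\partial B_\varepsilon(x,D)$, the contrapositive of Lemma \ref{manekeylemma} forces $\dim X=0$.

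Second, distality makes every point almost periodic (classical, via the fact that the Ellis semigroup $E(X,f)$ is a group, so the orbit closure of every point coincides with its $E$-orbit, which is minimal), so $X$ decomposes into a disjoint union of minimal sets. For each such minimal set $M$, the Furstenberg--Ellis structure theorem realises $M$ as a homogeneous space for a compact topological group acting by homeomorphisms, yielding an $f$-invariant compatible metric $\rho$ on $M$ under which $f|_M$ is an isometry. Then for any $x\ne y$ in $M$ one has $\sup_n\rho(f^n x,f^n y)=\rho(x,y)$, and by expansivity this must exceed the expansivity constant, making $M$ a uniformly discrete compact set, hence finite. A similar accumulation argument, together with the uniform-distality consequence of the invariant-metric construction, prevents $X$ from having infinitely many minimal sets, yielding finiteness of $X$.

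The main obstacle is the invariant-metric construction in the second step, which invokes the nontrivial Furstenberg--Ellis structure theory for distal systems. A more self-contained alternative would combine the zero-dimensionality with Theorem \ref{mane} to realise each minimal set as a minimal subshift, and then give a direct combinatorial proof that a minimal distal subshift is finite, by showing that distality in a subshift bounds the common disagreement window between any two distinct sequences uniformly, which together with minimality forces only finitely many possible sequences over the finite alphabet.
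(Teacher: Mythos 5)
Your first step is sound: distality gives $W^{s}(x,D)=W^{u}(x,D)=\{x\}$ for every $x$, Proposition \ref{stable set} then collapses every local stable and unstable set of scale below an expansivity constant for $D$ to a point, and the contrapositive of Lemma \ref{manekeylemma} yields $\dim(X)=0$. (Note the paper offers no proof of this proposition; it is quoted from \cite{BS}.) The genuine gap is your second step. A minimal distal system does \emph{not} in general admit an $f$-invariant compatible metric: that would make $f|_M$ equicontinuous, and minimal distal systems need not be equicontinuous --- the standard counterexample is the skew product $(x,y)\mapsto(x+\alpha,\,y+x)$ on $\mathbb{T}^2$ with $\alpha$ irrational, which is minimal and distal but not equicontinuous. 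The Furstenberg--Ellis structure theory only presents $M$ as an inverse limit of a tower of isometric \emph{extensions}; the Ellis group acting transitively on $M$ is a compact right-topological group whose action is not jointly continuous, so no averaging produces an invariant metric. Hence both the finiteness of each minimal set and your concluding ``accumulation argument'' against infinitely many minimal sets rest on a false premise. Your fallback in the last paragraph hides the same gap: the assertion that distality bounds the disagreement window between distinct sequences \emph{uniformly} over all pairs is exactly the uniformity that must be proved, and it is not a formal consequence of the pointwise definition of distality.

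The frustrating part is that your step one already contains everything needed, and the detour through $\dim(X)=0$ and structure theory is what derails you. The usual proof (and the one in \cite{BS}) is a direct compactness argument: if $X$ is infinite, pick distinct $x_n,y_n$ with $D(x_n,y_n)\to 0$; expansivity provides times at which the two orbits separate by more than $c/2$, the minimal such $|k_n|$ tends to infinity by uniform continuity of the iterates, and shifting each pair to just before its first separation time and passing to a convergent subsequence produces distinct points $u\neq v$ with $v\in W^{s}_{c/2}(u,D)$ or $v\in W^{u}_{c/2}(u,D)$. This contradicts the triviality of all local stable and unstable sets you established in step one (equivalently, by Proposition \ref{stable set} and Theorem \ref{hypermetric} such a pair is asymptotic under $f$ or $f^{-1}$, contradicting distality directly), and it rules out infinitely many minimal sets at the same stroke since it never decomposes $X$. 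Replacing your second step by this argument turns the proposal into a complete proof.
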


\subsection{Pseudo orbit tracing property }

A sequence of points $\{ x_i: a<i<b\} (-\infty\leq a<b\leq +\infty)$ is called a $\delta$ {\it pseudo orbit} for $f$ if $d(f(x_i), x_{i+1})<\delta$ for each $i\in(a,b-1)$. A sequence $\{x_i: a<i<b\}$ is called to be $\varepsilon$ {\it traced} by $x\in X$ if $d(f^{i}(x),x_i)<\varepsilon$ for each $i\in(a,b)$. We say that $f$ has the {\it pseudo orbit tracing property} if for every $\varepsilon>0$, there is $\delta>0$ such that every $\delta$ pseudo orbit for $f$ can be $\varepsilon$ traced by some point of $X$.
\medskip

The following theorem is only a part of the main theorem in \cite{MY} by Mai and Ye.

\begin{thm} \label{rec potp}
Let $X$ be a compact metric space and $f$ be a minimal homeomorphism on $X$. If $f$ has the pseudo orbit tracing property, then
it is conjugate to a subsystem of some adding machine; in particular, it is equicontinuous.
\end{thm}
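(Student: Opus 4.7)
My plan is to construct a topological conjugacy $\pi \colon X \to G$ directly onto an adding machine $G$, via a nested sequence of clopen Rokhlin towers. The work breaks into three phases: (1) show $\dim X = 0$; (2) extract a refining sequence of clopen Rokhlin partitions cyclically permuted by $f$, with heights going to infinity and meshes going to zero; and (3) show the induced factor map onto the inverse limit is injective.

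For phase (1), fix $\varepsilon > 0$ and let $\delta = \delta(\varepsilon)$ be the POTP constant. By minimality, pick a $(\delta/4)$-dense finite set $F = \{p_1,\ldots,p_N\}$ with a uniform syndetic upper bound on return times to each $B_{\delta/4}(p_i)$. Each $x \in X$ admits an ``$F$-itinerary'' $(i_n)_{n \in \Z} \in \{1,\ldots,N\}^{\Z}$ obtained by assigning to $f^n x$ some $p_{i_n} \in F$ with $d(f^n x, p_{i_n}) < \delta/4$. Using POTP together with the uniform returns, any two points sharing such an itinerary must have orbits which stay within $O(\varepsilon)$ of each other, and the itinerary equivalence classes realize as a finite clopen partition of $X$ with mesh at most $O(\varepsilon)$. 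Letting $\varepsilon \to 0$ produces a clopen basis, hence $\dim X = 0$.

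For phase (2), with $X$ zero-dimensional and $f$ minimal, the Rokhlin--Kakutani construction for Cantor minimal systems gives, for each clopen neighbourhood $U$ of a base point, a clopen Rokhlin tower decomposition of $X$ built from the (finitely many, by continuity and compactness) first return times of $U$-points to $U$. Using POTP to synchronise towers as $U$ shrinks through a clopen basis, I obtain a refining sequence of clopen Rokhlin partitions $\mathcal P_1 \prec \mathcal P_2 \prec \cdots$ with heights $k_n$ satisfying $k_n \mid k_{n+1}$, $k_n \to \infty$, and meshes tending to $0$. Each $\mathcal P_n$ yields a factor map $\pi_n \colon (X, f) \to (\Z/k_n\Z, +1)$, and these compatible factors assemble into a factor $\pi \colon X \to G := \varprojlim_n \Z/k_n\Z$ onto an adding machine.

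For phase (3), since $\mathrm{mesh}(\mathcal P_n) \to 0$, any two points with $\pi(x) = \pi(y)$ share a cell of every $\mathcal P_n$ and therefore coincide; so $\pi$ is a conjugacy onto its image, and minimality of $G$ forces the image to be all of $G$. The ``in particular'' conclusion about equicontinuity is then automatic, as every odometer is equicontinuous. I expect the main obstacle to be phase (1): wringing zero-dimensionality of $X$ out of the shadowing hypothesis and minimality alone, without access to expansivity or any \emph{a priori} topological structure on $X$. Once that is in hand, phases (2) and (3) are essentially classical Bratteli--Vershik-style machinery.
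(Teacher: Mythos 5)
First, a bookkeeping point: the paper does not prove Theorem \ref{rec potp} at all --- it is quoted verbatim from Mai and Ye \cite{MY} --- so your argument has to stand on its own. Judged that way, it has a genuine gap, and it sits exactly where you predicted, in phases (1) and (2). In phase (1), the statement that two points sharing an $F$-itinerary have orbits staying $O(\varepsilon)$-close is true but vacuous: if $d(f^nx,p_{i_n})<\delta/4$ and $d(f^ny,p_{i_n})<\delta/4$ for all $n$, then $d(f^nx,f^ny)<\delta/2$ by the triangle inequality alone, with no use of the tracing property or of syndetic return times. The same observation applies verbatim to an irrational rotation of the circle, where the corresponding ``classes'' are small arcs, not clopen sets; so the step ``the itinerary equivalence classes realize as a finite clopen partition'' is unsupported (the classes are not even well defined, since the itinerary is a non-canonical choice and the induced relation is not transitive), and producing \emph{any} nontrivial clopen set is essentially the content of the theorem. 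Phase (2) repeats the problem one level up: a Cantor minimal system does not in general admit cyclically permuted clopen partitions of unbounded period --- a topologically mixing minimal subshift has no nontrivial factor onto $(\Z/k\Z,+1)$ at all --- so Kakutani--Rokhlin towers cannot be ``synchronised'' into such partitions by classical machinery; the phrase ``using POTP to synchronise towers'' is carrying the entire theorem and is not an argument.

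The missing mechanism is the periodic pseudo-orbit. Fix $x$ and $\varepsilon$, let $\delta$ be the tracing constant; by minimality the orbit of $x$ returns $\delta$-close to $x$ at some time $p$, so the segment $x,f(x),\dots,f^{p-1}(x)$ repeated periodically is a periodic $\delta$-pseudo-orbit, $\varepsilon$-traced by some $z$. Every $f^{kp}(z)$ traces the same pseudo-orbit, so the $f^{p}$-orbit closure of $z$ is contained in the closed $\varepsilon$-ball about $x$ and contains an $f^{p}$-minimal set $M$. Since $(X,f)$ is minimal, $X=M\cup f(M)\cup\dots\cup f^{p-1}(M)$ and the distinct translates are pairwise disjoint closed sets, hence clopen and cyclically permuted by $f$, with $M$ of diameter at most $2\varepsilon$. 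This is what actually produces small clopen sets and finite cyclic factors of unbounded order, and it can be iterated inside $(M,f^{q})$, which inherits POTP because $M$ is clopen. Even with this in hand, note that only the base $M$ is small, not its translates $f^{i}(M)$, so your phase (3) premise that the meshes tend to zero (equivalently, that the factor map onto the odometer is injective) still requires an argument --- in effect one must also establish equicontinuity, which is the other half of Mai and Ye's proof. As written, the proposal describes the desired conclusion rather than deriving it.
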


\section{Some auxiliary lemmas}
In this section, we prepare some technical lemmas which will be used later.
\medskip

Throughout this section, we
let $f$ be an expansive homeomorphism on a compact metric space $X$.

\begin{lem}\label{fnhyp}
 There is a compatible metric $D$ on $X$ such that for any $A\geq 1$, there exist $\delta>0$ and positive integer $N$ such that for any $x\in X$ and any $y\in W_{\delta, f^{N}}^{s}(x,D)$, we have
\begin{eqnarray*}
& &  D(f^{-N}(x), f^{-N}(y))\geq AD(x,y), {\rm and}\\
& & D(f^{iN}(x), f^{iN}(y))\leq \frac{1}{A^{i}} D(x,y), \forall i\geq 0,
\end{eqnarray*}
where $W_{\delta, f^{N}}^{s}(x,D)=\{y\in X: D(f^{iN}(x),f^{iN}(y))\leq \delta, \forall i\geq 0\} $.
\end{lem}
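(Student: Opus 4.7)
The plan is to apply Reddy's hyperbolic metric theorem (Theorem~\ref{hypermetric}) once to fix a compatible metric $D$ and constants $\gamma>0$, $0<\lambda<1$, $a\geq 1$, and then, for each prescribed $A\geq 1$, to pick $N$ large and $\delta$ small so that the $f^{N}$-stable set of scale $\delta$ is automatically contained in the ordinary $f$-stable set of scale $\gamma$.

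Concretely, given $A\geq 1$, I would first fix $N$ with $a\lambda^{N}\leq 1/A$. Using the uniform continuity of $f^{-N},f^{-N+1},\dots,f^{N-1}$ on the compact metric space $(X,D)$, I would then select $\delta\in(0,\gamma]$ so small that
\[D(u,v)\leq\delta\;\Longrightarrow\;D(f^{j}(u),f^{j}(v))\leq\gamma\quad\text{for every }-N\leq j\leq N-1.\]
The key structural fact is that, with this choice, any $y\in W_{\delta,f^{N}}^{s}(x,D)$ automatically satisfies both $y\in W_{\gamma}^{s}(x,D)$ and $f^{-N}(y)\in W_{\gamma}^{s}(f^{-N}(x),D)$. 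For the first, writing $n\geq 0$ as $n=iN+j$ with $0\leq j<N$ and combining $D(f^{iN}(x),f^{iN}(y))\leq\delta$ with the uniform-continuity bound for $f^{j}$ gives $D(f^{n}(x),f^{n}(y))\leq\gamma$. For the second, one additionally needs $D(f^{m}(x),f^{m}(y))\leq\gamma$ for $-N\leq m\leq -1$, which follows from $D(x,y)\leq\delta$ and the uniform-continuity bound for $f^{m}$.

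With both memberships established, the two conclusions drop out of Theorem~\ref{hypermetric}(i). Iterating that estimate with step $N$ at the pairs $(f^{iN}(x),f^{iN}(y))$ (whose local-stable-set memberships are inherited from $y\in W_{\gamma}^{s}(x,D)$) produces
\[D(f^{iN}(x),f^{iN}(y))\leq (a\lambda^{N})^{i}\,D(x,y)\leq A^{-i}\,D(x,y)\]
for every $i\geq 0$. Applying the same estimate with $n=N$ to the pair $(f^{-N}(x),f^{-N}(y))$ instead gives $D(x,y)\leq a\lambda^{N}\,D(f^{-N}(x),f^{-N}(y))\leq A^{-1}D(f^{-N}(x),f^{-N}(y))$, which rearranges to the desired lower bound $D(f^{-N}(x),f^{-N}(y))\geq A\,D(x,y)$. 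The only step that requires genuine care is the uniform-continuity bookkeeping: a single $\delta$ has to control all iterates $(f^{iN}(x),f^{iN}(y))$ simultaneously, and this is precisely why one works with Reddy's compact metric rather than applying the contraction estimate only at the base point.
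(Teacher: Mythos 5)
Your proposal is correct and follows essentially the same route as the paper: fix Reddy's metric once, choose $N$ with $a\lambda^{N}\leq 1/A$, use uniform continuity of finitely many iterates to choose $\delta$ so that $W_{\delta,f^{N}}^{s}(x,D)\subset W_{\gamma}^{s}(x,D)$ and $f^{-N}(W_{\delta,f^{N}}^{s}(x,D))\subset W_{\gamma}^{s}(f^{-N}(x),D)$, and then read off both estimates from Theorem~\ref{hypermetric}(i). The only differences from the paper's argument are cosmetic bookkeeping (which range of exponents the uniform-continuity step covers, and iterating the one-step contraction versus invoking the $n$-step bound directly).
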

\begin{proof}
Let the metric $D$, $\gamma$, $\lambda$, and $a$ be as in Theorem \ref{hypermetric}. Take a positive integer $N$ with $a\lambda^{N}<1/A$. Take $\delta>0$ be such that
for any $u, v\in X$  with $D(u, v)\leq \delta$,
\begin{equation}\label{eq0}
 \max_{0\leq n\leq N}D(f^{-n}(u), f^{-n}(v))\leq \gamma.
\end{equation}
For each $n\geq 0$, write $n=kN+r$, where $k\geq 1$ and $-N\leq r< 0$. Then for any $y\in W_{\delta, f^{N}}^{s}(x,D)$,  it follows from (1) that
$$D(f^n(f^{-N}x), f^n(f^{-N}y))=D(f^r(f^{(k-1)N}x), f^r(f^{(k-1)N}y))\leq \gamma,$$
which means
$$f^{-N}(y)\in W^{s}_{\gamma}(f^{-N}(x),D)\ {\rm and}\ y\in W^{s}_{\gamma}(x,D).$$
Thus, for any $y\in W_{\delta, f^{N}}^{s}(x,D)$, we have
$$D(x,y)=D(f^{N}(f^{-N}x), f^{N}(f^{-N}y)) $$
$$\leq a\lambda^{N}D(f^{-N}(x), f^{-N}(y))\leq \frac{1}{A}D(f^{-N}(x), f^{-N}(y)),$$
and for each $i\geq 0$,
$$ D(f^{iN}(x), f^{iN}(y))\leq a\lambda^{iN}D(x,y)\leq \frac{1}{A^{i}} D(x,y).$$
This completes the proof.
\end{proof}

Now we propose the following assumption under which some lemmas are obtained.

\begin{ass}\label{assum} There exist $A>1$, a compatible metric $D$ on $X$, and $\delta>0$,  such that for any $x\in X$ and any $y\in W_{\delta}^{s}(x,D)$,
it holds that
\begin{eqnarray*}
& &  D(f^{-1}(x), f^{-1}(y))\geq A D(x,y), {\rm and}\\
& & D(f^{n}(x),f^{n}(y))\leq \frac{1}{A^{n}}D(x,y), \forall n\geq 0.
\end{eqnarray*}
\end{ass}

Under Assumption \ref{assum}, we have the following lemma.

\begin{lem}\label{stab}
If $y,z\in W_{\delta/2}^{s}(x,D)$, then $z\in W_{\delta}^{s}(y,D)$.
\end{lem}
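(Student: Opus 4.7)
The plan is to derive this lemma directly from the triangle inequality applied term by term along the forward orbit; interestingly, the hyperbolic contraction/expansion estimates of Assumption \ref{assum} do not actually enter the argument, so the lemma holds for any homeomorphism and any compatible metric. Its role is merely to record that shifting the basepoint of a local stable set is cheap if one is willing to double the scale.

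Concretely, I would first unpack the definition of the local stable set: $y\in W^s_{\delta/2}(x,D)$ means precisely that $D(f^n(x),f^n(y))\leq \delta/2$ for every $n\geq 0$, and likewise $D(f^n(x),f^n(z))\leq \delta/2$ for every $n\geq 0$. Then for each such $n$, the triangle inequality gives
\[
D(f^n(y),f^n(z)) \;\leq\; D(f^n(y),f^n(x)) + D(f^n(x),f^n(z)) \;\leq\; \tfrac{\delta}{2}+\tfrac{\delta}{2} \;=\; \delta,
\]
which is exactly the condition $z\in W^s_{\delta}(y,D)$.

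Since the argument is purely formal, there is no real obstacle to overcome. I expect the lemma will be invoked later in combination with the uniform contraction estimate from Assumption \ref{assum}: once two points $y,z$ lying in $W^s_{\delta/2}(x,D)$ have been recognised as belonging to a common local stable set (namely $W^s_{\delta}(y,D)$), the exponential bound $D(f^n(y),f^n(z))\leq A^{-n}D(y,z)$ applies to the pair $(y,z)$ as well, which is the form in which the hyperbolic structure will be used in the subsequent proofs.
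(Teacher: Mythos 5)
Your proof is correct and is essentially the same as the paper's: a term-by-term triangle inequality along the forward orbit giving $D(f^n(y),f^n(z))\leq \delta/2+\delta/2=\delta$ for all $n\geq 0$. Your observation that Assumption \ref{assum} plays no role here, and that the hyperbolic estimates only enter when the lemma is combined with Corollary \ref{expand}, is also accurate.
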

\begin{proof}
For each $n\geq 0$, we have
\[
D(f^{n}(y), f^{n}(z))\leq D(f^{n}(y), f^{n}(x))+D(f^{n}(x), f^{n}(z))\leq \delta/2+\delta/2=\delta.
\]
So, the conclusion holds.
\end{proof}

The following corollary is direct from Assumption \ref{assum} and Lemma \ref{stab}.
\begin{cor}\label{expand}
If $y,z\in W_{\delta/2}^{s}(x,D)$, then $D(f^{-1}(y), f^{-1}(z))\geq A D(y,z)$.
\end{cor}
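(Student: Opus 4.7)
The plan is to simply chain Lemma \ref{stab} into Assumption \ref{assum}. First I would apply Lemma \ref{stab} to the points $y, z \in W_{\delta/2}^s(x, D)$, which yields $z \in W_\delta^s(y, D)$. This relocates the pair $(y, z)$ into the hypothesis of Assumption \ref{assum}, but now centred at $y$ rather than at $x$.

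Next, I would invoke Assumption \ref{assum} verbatim with the role of ``$x$'' played by $y$ and the role of ``$y$'' played by $z$. Since $z \in W_\delta^s(y, D)$, the first conclusion of the assumption gives immediately
\[
D(f^{-1}(y), f^{-1}(z)) \geq A\, D(y, z),
\]
which is exactly the statement of the corollary.

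There is essentially no obstacle here; the content of the corollary is that local expansion in backward time, which Assumption \ref{assum} asserts centred at an arbitrary point of $X$ for partners in its $\delta$-stable set, transfers to any pair inside a common $\delta/2$-stable set via the triangle inequality packaged in Lemma \ref{stab}. So the proof is a one-line composition: Lemma \ref{stab} places $z$ in $W_\delta^s(y, D)$, and Assumption \ref{assum} then delivers the backward expansion estimate.
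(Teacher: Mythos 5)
Your proposal is correct and matches the paper exactly: the authors state that Corollary \ref{expand} is direct from Assumption \ref{assum} and Lemma \ref{stab}, i.e.\ precisely the one-line composition you describe. Nothing further is needed.
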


\begin{cor}\label{expand2}
If $y,z\in f^{-1}\left(W_{\delta/2}^{s}(x,D)\right)$ and $D(y,z)\leq \frac{\delta}{2}$, then $z\in W_{\delta}^{s}(y,D)$.
\end{cor}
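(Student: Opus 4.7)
The plan is to mimic the short triangle-inequality argument of Lemma \ref{stab}, but with the role of the ``center'' $x$ replaced by the shifted orbit. Concretely, the hypothesis $y,z\in f^{-1}(W_{\delta/2}^{s}(x,D))$ means $f(y),f(z)\in W_{\delta/2}^{s}(x,D)$, which by definition of the local stable set gives, for every $n\ge 0$,
\[
D(f^{n+1}(y),f^{n}(x))\le \delta/2\qquad\text{and}\qquad D(f^{n+1}(z),f^{n}(x))\le \delta/2 .
\]
So the idea is: for each forward iterate $f^{m}$ with $m\ge 1$, apply the triangle inequality through the pivot $f^{m-1}(x)$, and for $m=0$ use the extra standing assumption $D(y,z)\le \delta/2$.

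More precisely, I would first dispatch the case $m=0$ by noting $D(y,z)\le \delta/2\le \delta$. Then for every $m\ge 1$, writing $m=n+1$ with $n\ge 0$, the two displayed inequalities above yield
\[
D(f^{m}(y),f^{m}(z))\le D(f^{m}(y),f^{m-1}(x))+D(f^{m-1}(x),f^{m}(z))\le \delta/2+\delta/2=\delta .
\]
Combining the two cases, $D(f^{m}(y),f^{m}(z))\le \delta$ for all $m\ge 0$, which is exactly the statement $z\in W_{\delta}^{s}(y,D)$.

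There is essentially no obstacle here; the only subtlety worth flagging is that Lemma \ref{stab} cannot be invoked directly, because it would require $y$ and $z$ themselves to lie in $W_{\delta/2}^{s}(x,D)$, whereas here we only know this about their images under $f$. That is precisely why the separate hypothesis $D(y,z)\le \delta/2$ is needed to handle the zeroth iterate — the triangle-inequality pivot argument only controls $D(f^{m}(y),f^{m}(z))$ for $m\ge 1$. Once this is noticed, the proof is a two-line estimate.
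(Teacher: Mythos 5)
Your argument is correct and is essentially identical to the paper's own proof: both use the triangle inequality through the pivot $f^{n}(x)$ for the iterates $m=n+1\ge 1$, and both invoke the extra hypothesis $D(y,z)\le \delta/2$ to cover the zeroth iterate. Your remark about why Lemma \ref{stab} cannot be applied directly is accurate and explains exactly why that extra hypothesis appears in the statement.
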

\begin{proof}
For each $n\geq 0$,
\[ D(f^{n+1}(y),f^{n+1}(z))\leq D(f^{n}(fy), f^{n}(x))+D( f^{n}(x),f^{n}(fz))\leq \delta/2+\delta/2=\delta.\]
Thus  $z\in W_{\delta}^{s}(y,D)$, since $D(y,z)\leq \frac{\delta}{2}\leq \delta$.
\end{proof}

Under Assumption \ref{assum} with $A\geq 2$, we strengthen Lemma \ref{manekeylemma} as follows.
\begin{lem} \label{minpt}
If $\Sigma_{\varepsilon_1}^{s}(a, D)\cap \partial B_{\varepsilon_1}(a,D)\neq\emptyset$ for some point $a\in X$ and $\varepsilon_1>0$, then
there is an almost periodic point $x^{*}\in X$  and $\varepsilon_2>0$ such that
\[ \Sigma_{\varepsilon_2}^{s}(x^{*}, D)\cap \partial B_{\varepsilon_2}(x^{*},D)\neq\emptyset .\]
\end{lem}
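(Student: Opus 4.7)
The plan is to exploit the backward expansion in Assumption~\ref{assum} to amplify the given stable-continuum property along the backward orbit of $a$ until the scale saturates at $\delta$, and then extract $x^*$ as any almost periodic point in $\alpha(a,f)$. By a preliminary forward iteration and Proposition~\ref{stable set} if necessary, I would first reduce to the case $\varepsilon_1 \leq \delta$.

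The key step is a one-step monotonicity lemma: \emph{if $b \in X$ and $r \in (0,\delta]$ satisfy $\Sigma_r^s(b,D) \cap \partial B_r(b,D) \neq \emptyset$, then the same property holds at $f^{-1}(b)$ for some $r' \geq \min(Ar,\delta)$.} To prove it, let $C := \Sigma_r^s(b,D)$ and pick $y_0 \in C$ with $D(b,y_0)=r$, so that $D(f^{-1}(b), f^{-1}(y_0)) \geq Ar$ by Assumption~\ref{assum}. Consider the continuum $f^{-1}(C) \ni f^{-1}(b)$ and let $C'$ be the connected component of $f^{-1}(C) \cap \overline{B_\delta(f^{-1}(b),D)}$ containing $f^{-1}(b)$. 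Either $f^{-1}(C) \subseteq \overline{B_\delta(f^{-1}(b),D)}$, giving $C' = f^{-1}(C)$ and $r' := \max_{w \in C'} D(f^{-1}(b),w) \in [Ar,\delta]$; or $f^{-1}(C)$ escapes $\overline{B_\delta(f^{-1}(b),D)}$, and the classical boundary bumping theorem then forces $C' \cap \partial B_\delta(f^{-1}(b),D) \neq \emptyset$ so that $r' := \delta$ works. In either case one checks $C' \subseteq W_{r'}^s(f^{-1}(b),D)$ using $D(f^j(b),f^j(z)) \leq r \leq r'$ for $j \geq 0$ (since $z \in C \subseteq W_r^s(b,D)$) and $D(f^{-1}(b), f^{-1}(z)) \leq r'$ (by choice of $r'$).

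Iterating the monotonicity, the scale at $f^{-k}(a)$ is at least $\min(A^k\varepsilon_1, \delta) = \delta$ for all $k \geq k_0 := \lceil \log_A(\delta/\varepsilon_1)\rceil$. Hence $\{f^{-k}(a) : k \geq k_0\} \subseteq A_\delta := \{x \in X : \Sigma_\delta^s(x,D) \cap \partial B_\delta(x,D) \neq \emptyset\}$. A short Hausdorff-limit argument (the limit of the continua $\Sigma_\delta^s(x_n,D)$ along $x_n \to x$ is itself a subcontinuum of $\Sigma_\delta^s(x,D)$) shows that the function $x \mapsto \max_{y \in \Sigma_\delta^s(x,D)} D(x,y)$ is upper semicontinuous, so $A_\delta$ is closed and $\alpha(a,f) \subseteq A_\delta$. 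Since $\alpha(a,f)$ is non-empty, closed, and $f$-invariant, Zorn's lemma yields a minimal subset, and any almost periodic $x^*$ in it satisfies $\Sigma_\delta^s(x^*,D) \cap \partial B_\delta(x^*,D) \neq \emptyset$, giving the conclusion with $\varepsilon_2 := \delta$.

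The main obstacle is the monotonicity lemma, specifically the case $Ar > \delta$ where the full factor-$A$ expansion is unavailable and one must settle for the cap $r' = \delta$. The delicate point there is to verify that the subcontinuum $C'$ actually lives inside $W_{r'}^s(f^{-1}(b),D)$ and not just in $\overline{B_{r'}(f^{-1}(b),D)}$, which is handled uniformly by the two bounds above, using only the forward-contraction part of Assumption~\ref{assum}.
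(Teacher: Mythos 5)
Your proof is correct and follows essentially the same strategy as the paper's: use the backward expansion of Assumption~\ref{assum} together with boundary bumping to propagate a nondegenerate local stable continuum along the backward orbit of $a$, then pass to a Hausdorff limit of these continua at an almost periodic point of $\alpha(a,f)$. The only cosmetic differences are that you let the radius grow and saturate at $\delta$ (the paper fixes it at $\delta/2$ throughout) and that you package the limit step as closedness of the set $A_\delta$ rather than as an explicit convergent subsequence.
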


\begin{proof}
Let $\delta>0$ be as in Assumption \ref{assum}. We may as well assume that $\delta<\varepsilon_1$. Since $\Sigma_{\varepsilon_1}^{s}(a, D)\cap \partial B_{\varepsilon_1}(a,D)\neq\emptyset$, we can take a point $y_1\in \Sigma_{\delta/2}^{s}(a, D)$ with $D(a,y_1)=\frac{\delta}{2}$ by the Boundary Bumping Lemma \cite[Chapter V]{Nadler}.

By Corollary \ref{expand}, we have $D(f^{-1}(a), f^{-1}(y_1))\geq A D(a,y_1)\geq\delta$. Thus there is a point $y_2\in \Sigma_{\delta/2}^{s}(f^{-1}(a), D)$ with $D(f^{-1}(a),y_2)=\frac{\delta}{2}$. Repeating this process, we obtain a sequence $(y_n)$ of points in $X$ satisfying
\[ y_{n+1}\in \Sigma_{\delta/2}^{s}(f^{-n}(a), D)\ \ \ \text{ and }\ \  D(f^{-n}(a), y_{n+1})=\frac{\delta}{2},\]
for any $n\geq 0$.

Since the $\alpha$-limit set $\alpha(a, f)$ of $a$ is a nonempty closed invariant subset of $X$, there is an increasing sequence $n_1<n_2<n_3<\cdots$ such that $f^{-n_i}(a)\rightarrow x^{*}$ with $x^{*}$ being an almost periodic point in  $\alpha(a, f)$. By passing to some subsequence, we may further assume that $\Sigma_{\delta/2}^{s}(f^{-n_i}(a),D)$ converges in the hyperspace (\cite[Chapter IV]{Nadler}) to a compact connected subset $K$ of $X$.

 Notice that for any point $p\in K$, there is a sequence $x_{n_i}\in \Sigma_{\delta/2}^{s}(f^{-n_i}(a), D)$ with $x_{n_i}\rightarrow p$. Then for each $i\geq 1$ and $n\geq 0$, $D(f^n(f^{-n_i}a), f^{n}(x_{n_i}))\leq \delta/2$. Thus $D(f^{n}(x^*), f^{n}(p))\leq \delta/2$ and hence $p\in W_{\delta/2}^{s}(x^*,D)$. It follows from the connectedness of $K$ that $K\subset\Sigma_{\delta/2}^{s}(x^*,D)$. Let $q$ be a limit point of $(y_{n_i+1})$. Then $q\in K$ and $D(x^*, q)=\delta/2$. By taking a positive $\varepsilon_2\leq\delta/2$, we have
\[ \Sigma_{\varepsilon_2}^{s}(x^{*}, D)\cap \partial B_{\varepsilon_2}(x^{*},D)\neq\emptyset .\]
Thus we complete the proof.
\end{proof}

\section{proof of Theorem \ref{mainthm} and Corollary \ref{finite}}

To prove Theorem \ref{mainthm}, we need only to prove ${\rm dim}(X)=0$. If this is true, by a
canonical coding technique, we get that $f$ is conjugate to a subshift of some symbolic system.
Then applying Proposition \ref{trans}, we see that this subshift is semisimple if the recurrence is strengthened to positive recurrence.
 Thus we complete the proof.
\medskip

From Propositions \ref{fnrec} and \ref{fnexp}, we see that $f$ is both pointwise recurrent and expansive if
and only if $f^n$ is  both pointwise recurrent and expansive for any $n>1$. Thus, to prove  ${\rm dim}(X)=0$,
by Lemma \ref{fnhyp} and by replacing $f$ by some $f^N (N>1)$ if necessary, we may assume that $f$ satisfies
Assumption \ref{assum}.

\medskip
Here we should note that the number $A$ in Assumption \ref{assum} can be taken arbitrarily large, since
we can always replace $f$ by some $f^N$ with $N$ being sufficiently large.

\begin{proof}[Proof of Theorem \ref{mainthm}]
By the discussions at the beginning of this section, we need only to show that $\dim(X)=0$;
and we can suppose that  $f$ satisfies Assumption \ref{assum} with $A=7$. Let $\delta>0$
be as in Assumption \ref{assum}.

To the contrary, assume that $\dim(X)>0$. Then by Lemma \ref{manekeylemma}, there is a point $a\in X$ and $\varepsilon_1>0$ such that
\[ \Sigma_{\varepsilon_1}^{s}(a, D)\cap \partial B_{\varepsilon_1}(a,D)\neq\emptyset, \ \ \ \text{or }\ \ \ \Sigma_{\varepsilon_1}^{u}(a, D)\cap \partial B_{\varepsilon_1}(a,D)\neq\emptyset.\]
By replacing $f$ with $f^{-1}$ if necessary, we may as well assume that $\Sigma_{\varepsilon_1}^{s}(a, D)\cap \partial B_{\varepsilon_1}(a,D)\neq\emptyset$. Then it follows from Lemma \ref{minpt} that there is an almost periodic point $x^{*}\in X$ and $0<\varepsilon_2\leq \delta/2$
such that
\[ \Sigma_{\varepsilon_2}^{s}(x^{*}, D)\cap \partial B_{\varepsilon_2}(x^{*},D)\neq\emptyset .\]
Let $\Lambda=\overline{orb(x^*,f)}$, which is totally disconnected by Theorem \ref{mane dim}. Since
 $\Sigma_{\varepsilon_2}^{s}(x^*,D)$ is connected and $\Sigma_{\varepsilon_2}^{s}(x^*,D)\cap \partial
 B(x^*, \varepsilon_2)\neq \emptyset$, $\Lambda\cap \Sigma_{\varepsilon_2}^{s}(x^*,D) $ is a proper closed subset of $\Sigma_{\varepsilon_2}^{s}(x^*,D)$ in the relative topology. Pick $y\in \Sigma_{\varepsilon_2}^{s}(x^*,D)$ and $\gamma>0$ with
\begin{equation}\label{eq3}
 \Sigma_{\varepsilon_2}^{s}(x^*,D)\cap B_{2\gamma}(y, D)\subset \Sigma_{\varepsilon_2}^{s}(x^*,D)\setminus \Lambda.
\end{equation}
Let $\Sigma_0$ be the connected component of $\Sigma_{\varepsilon_2}^{s}(x^*,D)\cap \overline{B_{\gamma}(y, D)}$ containing $y$. Take $z\in \Sigma_{\varepsilon_2}^{s}(x^*,D)\cap \partial B_{\gamma}(y, D) $. Since $\varepsilon_2\leq \delta/2$, $z\in W_{\delta}^{s}(y,D)$ by Lemma \ref{stab}.

Set $y_0=y$ and $z_0=z$. Then by Corollary \ref{expand},
\[ D(f^{-1}(y_0), f^{-1}(z_0))\geq A D(y_0,z_0)=7\gamma.\]
Thus either $D(f^{-1}(y_0), y)>3\gamma$ or $D(f^{-1}(z_0), y)>3\gamma$. Set $y_1=f^{-1}(y_0)$ if $D(f^{-1}(y_0), y)>3\gamma$, otherwise set $y_1=f^{-1}(z_0)$.
Let $\Sigma_1$ be the connected component of $f^{-1}(\Sigma_0)\cap \overline{B_{\gamma}(y_1, D)}$ that contains $y_1$. Take $z_1\in\Sigma_1$ with $D(y_1, z_1)=\gamma$.
Noting that $\gamma\leq \delta/2$, by Corollary \ref{expand2}, we have $z_1\in W_{\delta}^{s}(y_1,D)$. Then  $D(f^{-1}(y_1), f^{-1}(z_1))\geq A D(y_1,z_1)=7\gamma$.
Thus either $D(f^{-1}(y_1), y)>3\gamma$ or $D(f^{-1}(z_1), y)>3\gamma$.  Choose $y_2$ from $\{f^{-1}(y_1), f^{-1}(z_1)\}$ such that $D(y_2, y)>3\gamma$. Let $\Sigma_2$ be the connected component of $f^{-1}(\Sigma_1)\cap \overline{B_{\gamma}(y_2, D)}$ that contains $y_2$. Take $z_2\in\Sigma_2$ with $D(y_2, z_2)=\gamma$. Repeating this process, we obtain a sequence $(\Sigma_i)$ of compact connected subsets such that $\Sigma_i\cap B_{2\gamma}(y,D)=\emptyset$ for each $i\geq 1$ and
\[ \overline{B_{\gamma}(y,D)}\supset \Sigma_0\supset f(\Sigma_1)\supset f^{2}(\Sigma_2)\supset\cdots.\]
Take a point $w\in\bigcap_{i\geq1} f^{i}(\Sigma_i)$. Then $w\in \overline{B_{\gamma}(y,D)}$ and $f^{-i}(w)\notin B_{2\gamma}(y,D)$ for each $i\geq 1$. Thus $w$ is not negatively recurrent. Since $w\in W^{s}(x^*,D)$, we have
\[D(f^{n}(w), \Lambda) \rightarrow 0\ \ \ \text{ as } \ \ n\rightarrow+\infty.  \]
 By the choice of $y$, we see that $w$ is not positively recurrent.  To sum up, $w$ is not recurrent. This is a contradiction.
\end{proof}

\begin{proof}[Proof of Corollary \ref{finite}]
From Theorem \ref{mainthm} and Theorem \ref{rec potp}, we see that
for each $x\in X$, the orbit closure $\overline{orb(x, f)}$ is
both expansive and equicontinuous, which implies that $x$ is periodic.
Thus $f$ is pointwise periodic, and so $X$ is finite by Propositions \ref{dis per}
and \ref{dist+expan}.
\end{proof}

\section{A non-minimal, pointwise recurrent and transitive subshift }
In this section, we will construct a non-minimal, pointwise recurrent and transitive subshift to show that
the pointwise positive recurrence in Proposition \ref{trans} and the second part of Theorem \ref{mainthm} cannot be weaken to pointwise recurrence.

\medskip
We define a sequence $\xi$ as
\[\cdots 0000\overset{\overset{\xi(0)}{\uparrow}}{1}010010100010100101000001010010100010100101\cdots.\]
Precisely, we let $\xi(n)=0$ for all $n<0$ and define $\xi(n)$ for $n\geq 0$ inductively.  Set
\begin{eqnarray*}
&\omega_1=1,\omega_2=\omega_10\omega_1=101, \omega_3=\omega_200\omega_2=10100101,\\
&\cdots, \omega_{n+1}=\omega_n\underset{n \text{ zeros}}{\underbrace{000\cdots000}}\omega_n,\cdots.
\end{eqnarray*}
Denote by $\ell_n$  the length of $\omega_n$ and let
\[\xi(0)\xi(1)\cdots\xi(\ell_n-1)=\omega_n.\]
In such a way, we eventually get the sequence $\xi$. Let $T: \{0,1\}^{\mathbb{Z}}\rightarrow\{0,1\}^{\mathbb{Z}}$ be the left shift. Let $X=\overline{orb(\xi, T)}$.
It is clear that $X$ is non-minimal, since ${\bf 0}=\cdots0000\cdots$ is in $X$ which is a fixed point. In the remaining part, we will show that $(X,T)$ is pointwise recurrent.

For $x\in \{0,1\}^{\mathbb{Z}}$, let $\overline{x}$ be the reflection of $x$ with respect to the origin, i.e. $\overline{x}(n)=x(-n)$.

\medskip
\noindent{\bf Claim 1.}  For any $x\in X$, we have $\overline{x}\in X$.
\begin{proof}
From the definition of $\xi$, we see that $\overline{\xi}=\lim_{n\rightarrow\infty}T^{\ell_n-1}\xi$. Then $\xi=\lim_{n\rightarrow\infty}T^{-(\ell_n-1)}\overline{\xi}$ and hence $\overline{\xi}$ is also a transitive point in $X$. Thus for any $x\in X$, we have $\overline{x}\in X$.
\end{proof}

\medskip
From the construction of $\xi$, we have the following claim.
\medskip

\noindent {\bf Claim 2.} For any $n>m>0$,
\begin{itemize}
\item [(a)] if $\xi(m) \cdots\xi(n)=00\cdots001$, then $\xi(n)\cdots\xi(n+\ell_{n-m}-1)=\omega_{n-m}$;
\item [(b)] if $n-m>\ell_{k}+k$, then  $\xi(m) \cdots\xi(n)$ contains $k-1$ consecutive $0$'s.
\end{itemize}

\medskip
\noindent{\bf Claim 3.} For any $x\in X\setminus\{{\bf 0}\}$, $1$ occurs in $x$ infinitely often.
\begin{proof}
To the contrary, suppose that there are finitely many $1$'s in $x$. Then there is $N>2$ such that $x(n)=0$ for any $|n|\geq N$. We further assume that $2^{N}> 2N+1$. Since $x\neq{\bf 0}$, we may assume that $T^{n_i}\xi\rightarrow x$ for some increasing sequence $0<n_1<n_2<\cdots$. Set $x_i=T^{n_i}\xi$  and take $i$  such that $n_i>2N$ and $x_i$ coincides with $x$ on $[-2N-\ell_{3N},2N+\ell_{3N}]$. Let $k$ be the least integer such that $x(k)=1$. Then
\[x_i(-2N)\cdots x_i(k)=\xi(n_i-2N)\cdots \xi(n_i+k)=00\cdots001.\]
By Claim 2 (a), we have
\[ x(k)\cdots x(k+\ell_{k+2N}-1)=x_i(k)\cdots x_i(k+\ell_{k+2N}-1)=\omega_{k+2N}.\]
Particularly, we have $x(k+\ell_{k+2N}-1)=1$. Note that $\ell_n\geq 2^{n}$ for any $n>2$. Thus $k+\ell_{k+2N}-1\geq 2^{N}-N-1>N$. This is absurd since $x$ takes $0$ outside $[-N,N]$. Thus we have proved the claim.
\end{proof}

\medskip
Now we are ready to show that $(X, T)$ is pointwise recurrent.  It suffices to show every $x\in X\setminus\{{\bf 0}\}$ is recurrent. By Claim 1, $\overline{\xi}\in X$. Noting that the recurrence of $x$ is equivalent to the recurrence of $\overline{x}$, by Claim 1 and Claim 3, we may as well
 assume that there are infinitely many $1$'s in the positive part of $x$.
Let $0<n_1<n_2<\cdots$ be such that $T^{n_i}\xi\rightarrow x$.

\medskip
\noindent{\bf Claim 4.} For any $k>0$, there exist $m(k)>0$ such that
\[ x(m(k))\cdots x(m(k)+k)=00\cdots001.\]

\begin{proof}
Let $r>0$ be such that $r\geq\ell_{k+1}+k+1$ and $x(r)=1$. Let $i>0$ be such that $T^{n_i}\xi$ coincides with $x$ on $[0,r]$.
Thus $x(0)\cdots x(r)=\xi(n_i)\cdots\xi(n_i+r)$. By Claim 2 (b), there are $k$ consecutive $0$'s in $x(0)\cdots x(r)$. Since $x(r)=1$, there is $m(k)\in(0,r)$ such that
\[ x(m(k))\cdots x(m(k)+k)=00\cdots001.\]
\end{proof}

To show the recurrence of $x$, it suffices to show that for any $N>0$, there is $n>0$ such that
\[x(n)\cdots  x(n+2N)=x(-N)\cdots x(N).\]
Since there is some $i$ such that $T^{n_i}\xi$ coincides with $x$ on $[-N,N]$, we have
\[ x(-N)\cdots x(N)=\xi(n_i)\cdots\xi(n_i+2N).\]
Let $s>0$ be such that $\ell_{s}> n_i+2N$. Thus $x(-N)\cdots x(N)$ is a subword of $\omega_{s}$. Set $t=m(n_i+2N)+n_i+2N+\ell_{n_i+2N}$. Let $j$ such that $T^{n_j}\xi$ coincides with $x$ on $[-t,t]$.
By Claim 4,  $x(m(n_i+2N))\cdots x(m(n_i+2N)+n_i+2N)=00\cdots001$.
Thus
\[ (T^{n_j}\xi)(m(n_i+2N))\cdots (T^{n_j}\xi)(m(n_i+2N)+n_i+2N)=00\cdots001.\]
By Claim 2 (b), we have
\begin{eqnarray*}
&&x(m(n_i+2N)+n_i+2N)\cdots x(m(n_i+2N)+n_i+2N+\ell_{n_i+2N}-1)\\
&=& (T^{n_j}\xi)(m(n_i+2N)+n_i+2N)\cdots (T^{n_j}\xi)(m(n_i+2N)+n_i+2N+\ell_{n_i+2N}-1)\\
&=& \xi(n_j+m(n_i+2N)+n_i+2N)\cdots \xi(n_j+m(n_i+2N)+n_i+2N+\ell_{n_i+2N}-1)\\
&=&\omega_{n_i+2N}.
\end{eqnarray*}
Note that $x(-N)\cdots x(N)$ is a subword of $\omega_{s}$. Thus $x(-N)\cdots x(N)$ is a subword of $$x(m(n_i+2N)+n_i+2N)\cdots x(m(n_i+2N)+n_i+2N+\ell_{n_i+2N}-1),$$ which implies the recurrence of $x$.

\subsection*{Acknowledgements}
We would like to thank Professor Bingbing Liang for helpful comments.
The work is supported by NSFC (No. 11771318, 11790274).

\end{document}